\theoremstyle{plain}
\newtheorem{thm}{Th\'eor\`eme}[section]
\newtheorem{definition}[thm]{D\'efinition}
\newtheorem{proposition}[thm]{Proposition}
\newtheorem{corollaire}[thm]{Corollaire}
\newtheorem{théorème}[thm]{Théorème}
\newtheorem{lemme}[thm]{Lemme}
\newtheorem{exemple}[thm]{Exemple}
\newcommand*{\bigboxplus}{\DOTSB\mathop{\mathpalette\big@boxplus\relax}\slimits@}
\theoremstyle{definition}
\newtheorem{notation}[thm]{Notation}
\newtheorem{remarque}[thm]{Remarque}
\def \N {\mathbb{N}}
\def \Q {\mathbb{Q}}
\def \Z {\mathbb{Z}}
\def \R {\mathbb{R}}
\def \C {\mathbb{C}}
\def \F {\mathbb{F}}
\def \G {\mathbb{G}}
\def \Gal{\mathop{\mathrm{Gal}}\nolimits}
\def \Bun{\mathop{\mathrm{Bun}}\nolimits}
\def \dim{\mathop{\mathrm{dim}}\nolimits}
\def \dimtrg{\mathop{\mathrm{dim.trg}}\nolimits}
\def \hom{\mathop{\mathrm{Hom}}\nolimits}
\def \lim{\mathop{\mathrm{lim}}\nolimits}
\def \LT{\mathop{\mathrm{LT}}\nolimits}
\def \Mod{\mathop{\mathrm{Mod}}\nolimits}
\def \Perf{\mathop{\mathrm{Perf}}\nolimits}
\def \Proj{\mathop{\mathrm{Proj}}\nolimits}
\def \Id{\mathop{\mathrm{Id}}\nolimits}
\def \id{\mathop{\mathrm{Id}}\nolimits}
\def \Ind{\mathop{\mathrm{Ind}}\nolimits}
\def \Rep{\mathop{\mathrm{Rep}}\nolimits}
\def \rec{\mathop{\mathrm{rec}}\nolimits}
\def \Sht{\mathop{\mathrm{Sht}}\nolimits}
\def \spec{\mathop{\mathrm{Spec}}\nolimits}
\def \Spa{\mathop{\mathrm{Spa}}\nolimits}
\def \Spd{\mathop{\mathrm{Spd}}\nolimits}
\def \St{\mathop{\mathrm{St}}\nolimits}
\def \Speh{\mathop{\mathrm{Speh}}\nolimits}
\def \Res{\mathop{\mathrm{Res}}\nolimits}
\def \J{\mathop{\mathrm{J}}\nolimits}
\def \Spf{\mathop{\mathrm{Spf}}\nolimits}
\def \JL{\mathop{\mathrm{JL}}\nolimits}
\begin{document}
\setcounter{tocdepth}{1}
\title{Shtukas adiques, modifications et applications}{\tiny }
\author{NGUYEN Kieu Hieu}
\maketitle

\newtheorem*{Résumé}{Résumé}
\begin{Résumé}
	 Dans cet article, via l'étude des modifications de fibrés vectoriels sur la courbe de Fargues-Fontaine, on prouve une formule géométrique reliant les tours de Lubin-Tate avec les espaces de Rapoport-Zink non ramifiés simples basiques de type EL de signature $(1, n-1), (p_1, q_1), \cdots, (p_k, q_k)$ où $p_iq_i = 0$. En particulier, on en déduit le calcul des groupes de cohomologie de ces derniers.   	  
\end{Résumé}
\newtheorem*{Abstract}{Abstract}
\begin{Abstract}	
	In this paper, via the study of the modifications of vector bundles on the Fargues-Fontaine curve, we prove a geometric formula relating the Lubin-Tate towers with the simple basic unramified Rapoport-Zink spaces of EL type of signature  $ (1, n-1), (p_1, q_1), \cdots, (p_k, q_k) $ where $ p_iq_i = 0 $. In particular, we deduce the computation of the cohomology groups of the latter.
\end{Abstract}



\thispagestyle{fancy}
\fancyhf{}
\fancyfoot[L]{\textbf{Classification mathématique par sujet (2010)} 11F70, 11F80, 11F85, 11G18, 20C08  }

\tableofcontents
\section{Introduction}
Le programme de Langlands prédit une bijection de nature arithmétique entre d'un côté les représentations galoisiennes et de l'autre les représentations automorphes. Pour construire des réalisations, on cherche des objets géométriques dont la cohomologie $\ell$-adique s'exprimera en termes de ces correspondances. Les premiers exemples sont donnés par les espaces de Rapoport-Zink (\cite{RZ96}). Par ailleurs, plus récemment, Scholze a construit des variétés de Shimura locales (\cite{RV14}, \cite{SW17}) qui se décrivent comme des espaces de modules de Shtukas dont la partie supercuspidale de la cohomologie est décrite par la conjecture de Kottwitz. 

Commençons par un triplet $(G, \mu, b)$ où $G$ est un groupe réductif sur $\Q_p$, $ b \in G(\breve{\Q}_p) $ et $\mu \in X^{+}_{*}(T)$ avec $b \in B(G, \mu)$ (c.f. section \ref{itm : Grassmanienne affine}) auquel on associe un $G$-fibré $\mathcal{E}_b$ sur la courbe de Fargues-Fontaine. De manière informelle, l'espace de module de Shtukas $\Sht(G, \mu, b)$ est un faisceau sur $\Perf_{\overline{\F}_p}$ classifiant les modifications de type $\mu$ entre $\mathcal{E}_b$ et $\mathcal{E}_1$. L'espace $\Sht(G, \mu, b)$ admet une donnée de descente ainsi qu'une action du groupe $G(\Q_p)$ (resp. $\J_b(\Q_p)$) définie par les isomorphismes de $\mathcal{E}_1$ (resp. $\mathcal{E}_b$). Le premier résultat principal de cet article est le théorème \ref{itm : géométrique} suivant.
\begin{théorème} \phantomsection \label{itm : thm}
	Notons $Z^0_G$ la composante connexe neutre du centre $Z_G$, $\lambda$ un cocaractère central de $G$ et $[b_{\lambda}]$ l'unique élément de $B(Z^0_G, \lambda)$. Il y a un isomorphisme $G(\Q_p) \times \J_b(\Q_p)$-équivariant de faisceaux pro-étale qui commute avec les données de descentes.
	\[
	\Sht(G, \mu, b) \times_{\underline{Z_G^0(\Q_p)}} \Sht(Z_G^0, \lambda) \longrightarrow \Sht(G, b\overline{b}_{\lambda} ,\mu \cdot \lambda).
	\]
\end{théorème}
Le résultat ci-dessus nous permet d'interpréter l'espace $\Sht(G, b\overline{b}_{\lambda} ,\mu \cdot ( \lambda))$ comme une modification centrale de l'espace $\Sht(G, \mu, b)$. La démonstration se base sur l'étude des $G$-fibrés sur la courbe de Fargues-Fontaine. La première étape consiste à construire une modification de type $\mu \cdot \lambda$ à partir de celle de type $\mu$. Ensuite, on propage cette construction au niveau des espaces de modules de Shtukas de manière compatible avec les actions de $G(\Q_p) \times \J_b(\Q_p)$ et notamment avec les données de descentes.

On applique ce résultat géométrique au calcul des groupes de cohomologie d'espaces de Rapoport-Zink que l'on peut ainsi relier avec la tour de Lubin-Tate. Pour chaque sous-groupe compact ouvert $K_p \subset G(\Q_p)$, il y a un espace de module de Shtukas de niveau fini $ \Sht(G, \mu, b) / K_p$ et une identification $ \displaystyle \Sht(G, \mu, b) = \mathop{\mathrm{lim}}_{\overleftarrow{K_p}} \Sht(G, \mu, b)/ K_p$. Lorsque $\mu$ est minuscule et le triplet $(G, \mu, b)$ est de type EL ou PEL, on retrouve les espaces de Rapoport-Zink, d'après le théorème 24.2.5 de \cite{SW17}. Afin de se ramener à la tour de Lubin-Tate qui correspond à la signature $\mu_{\mathcal{LT}} = (1, n-1), (0, n), \cdots, (0, n)$, on considère les espaces de Rapoport-Zink de type EL non ramifiés simples basiques de signature $(1, n-1), (p_1, q_1), \cdots, (p_k, q_k)$ où $p_iq_i = 0$. Soit $K_p \subset G(\Q_p) $ un sous groupe compact. On notera $ \Sht(G, \mu, b)/K_p $ par $\breve{\mathcal{M}}_{K_p}^{\mu}$.  D'après \cite{Far04}, on peut définir les groupes de cohomologie à support compact par
\[
H_c^{\bullet}(\breve{\mathcal{M}}_{K_p}^{\mu}, \Z_{\ell}) = \mathop{\mathrm{lim}}_{\overrightarrow{V}} \mathop{\mathrm{lim}}_{\overleftarrow{n}} H_c^{\bullet}(V \otimes_{\breve{\Q}_p} \C_p, \Z / \ell^n \Z )
\]
où $V$ parcourt les ouverts relativement compacts de $\breve{\mathcal{M}}^{\mu}_{K_p}$ et où $\ell \neq p$ est un nombre premier. On note également $H_c^{\bullet}(\breve{\mathcal{M}}^{\mu}_{K_p}, \Q_{\ell}) := H_c^{\bullet}(\breve{\mathcal{M}}_{K_p}^{\mu}, \Z_{\ell}) \otimes \overline{\Q}_{\ell}$ lesquels sont des $\overline{\Q}_{\ell}$-représentations de $G(\Q_p) \times D^{\times} \times W_{E_p}$ où $\J_b(\Q_p) = D^{\times}$ est le groupe des inversibles de l'algèbre de division d'invariant $\dfrac{1}{n}$ et où $W_{E_p}$ est le groupe de Weil du corps de définition $E_p$ de $\mu$.

  Les groupes de cohomologie de la tour de Lubin-Tate ont été entièrement calculés dans \cite{Boyer99}, \cite{HT01} et \cite{Boyer09}; par dualité \cite{Fal} \cite{FGL} \cite{SW17}, on obtient aussi le cas de Drinfeld. Pour un espace de type EL non ramifié général, la partie supercuspidale est traitée dans \cite{Far04}, \cite{Shin}. Sous l'hypothèse précédente, on obtient alors la description complète de groupes de cohomologie suivante avec les notations de la section \ref{itm : application}.

\begin{théorème}
		Pour tout diviseur $g$ de $n = gs$ et toute représentation irréductible cuspidale $\pi$ de $GL_g(F)$, on a des isomorphismes $G(\Q_p) \times W_{F}$-équivariants
		\[
		\mathop{\mathrm{lim}}_{\overrightarrow{K}} H_c^{n-1-i}(\breve{\mathcal{M}}_K^{\mu})[\pi[s]_D] = \left\lbrace \begin{array}{cc}
		\displaystyle \LT_{\pi}(s,i) \otimes \mathcal{L}(\pi) | \cdot |^{ - \frac{s(g+1) - 2(i+1)}{2}} \cdot \prod_{\tau \in J}  \omega_i \circ (\prescript{\tau}{}{\rec}^{-1}_{F}) & 0 \leq i < s \\
		0 & i < 0
		\end{array} \right. 
		\]
		où $\pi[s]_D$ désignera la représentation $\JL^{-1}(\St_s(\pi))^{\vee}$ de $D^{\times}$, $\omega_i$ est le caractère central de $\LT_{\pi}(s,i)$ et $\rec^{-1}_{F}$ est le morphisme de réciprocité d'Artin et où $\prescript{\tau}{}{\rec}^{-1}_{F} := \tau \cdot \rec^{-1}_{F} \cdot \tau^{-1}$.
\end{théorème}

La preuve repose sur le théorème \ref{itm : thm} qui donne une relation entre la tour $(\breve{\mathcal{M}}^{\mu}_{K_p})_{K_p}$ et la tour de Lubin-Tate ainsi que les résultats dans \cite{Boyer09} décrivant la cohomologie de la dernière.
\newtheorem*{remerciements*}{Remerciements}
\begin{remarque}
	En utilisant les résultats de \cite{Boyer14}, on peut prouver que les groupes de cohomologie $ \displaystyle \mathop{\mathrm{lim}}_{\overrightarrow{K}} H_c^{\bullet}(\breve{\mathcal{M}}^{\mu}_{K_p}, \Z_{\ell}) $ sont $\Z_{\ell}$-libres.
\end{remarque}
\begin{remerciements*}
	Je remercie profondément Pascal Boyer et Laurent Fargues tant pour leur aide mathématique déterminante que pour leurs constants encouragements.
\end{remerciements*}	
\section{La courbe de Fargues-Fontaine d'après \cite{FF}}
Soit $E / \Q_p$ une extension finie de corps résiduel $\F_q$ d'uniformisante $\pi$. Soit $F / \F_q $ un corps parfait muni d'une valuation non-triviale $ v : F \longrightarrow \R \cup \{ \infty \} $.

On pose $\mathcal{E} = W_{\mathcal{O}_E}(F) \Big[\dfrac{1}{\pi} \Big]$ où $W_{\mathcal{O}_E}(F)$ désigne l'anneau des vecteurs de Witt ramifiés à coefficients dans $F$. Plus précisément, on a
\[
\mathcal{E} = \Big\{ \sum_{n \gg -\infty} [x_n] \pi^n \ | \ x_n \in F \Big\}.
\]  

Le corps $\mathcal{E}$ est une extension non ramifiée complète de $E$ dont le corps résiduel est $F$. Il y a un relèvement de Teichmüler $ F \longrightarrow \mathcal{O}_{\mathcal{E}}, \ x \longmapsto [x]$.
On introduit alors le sous-anneau de $\mathcal{E}$
\[
B^{bd} = \Big\{ \sum_{n \gg -\infty} [x_n] \pi^n \in \mathcal{E} \ | \ \exists C, \ \forall n \ |x_n| < C \Big\}.
\]

On définit ensuite des normes de Gauss sur $B^{bd}$. Pour $r \in ] 0, \infty [$ et $ \rho = q^{-r} \in ] 0, 1 [$ on pose pour $ x = \sum_{n} [x_n] \pi^n \in B^{bd} $
\[
v_r(x) = \mathop{\mathrm{inf}}_{n \in \Z} v(x_n) + nr \ \ \ \mathop{\mathrm{et}}\nolimits \ \ \ | x |_{\rho} = \mathop{\mathrm{sup}}_{n \in \Z} |x_n|\rho^n.
\] 
Il s'agit de normes multiplicatives, autrement dit $v_r$ est une valuation sur $B^{bd}$.
\begin{definition}
	Pour $I \subset ]0, 1[$ un intervalle, on note $B_I$ le complété de $B^{bd}$ par rapport aux normes $(| . |_{\rho})_{\rho \in I}$. Notons $B = B_{]0, 1[}$. On a alors $ B = \varprojlim_I B_I $ où $I$ parcourt les intervalles compacts de $ ] 0, 1 [ $.
\end{definition}
L'anneau $\mathcal{E}$ possède un morphisme de Frobenius $\varphi$ tel que
\[
\varphi \Big( \sum_n [x_n] \pi^n \Big) = \sum_n [x_n^q]\pi^n. 
\]

On note également $ \varphi : [0, 1] \xrightarrow{\sim} [0, 1] $ défini par $\varphi(\rho) = \rho^q$. Le Frobenius $\varphi$ de $B^{bd}$ s'étend en un isomorphisme
\[
\varphi : B_I \xrightarrow{\sim} B_{\varphi(I)}.
\]

Ce morphisme de Frobenius induit alors un automorphisme $\varphi$ de $B$ avec $E = B^{\varphi = \Id}$.
\begin{definition} 
La courbe schématique de Fargues-Fontaine est  $X = \mathop{\mathrm{Proj(P)}}\nolimits $ où
	\[
	P = \bigoplus_{d \geq 0} B^{\varphi = \pi^d},
	\]
	vue comme $E$-algèbre graduée.
\end{definition} 
On note $\mathcal{O}_X(1) = \widetilde{P[1]}$ le fibré en droites tautologique sur $X$ et pour $d \in \Z$, $\mathcal{O}_X(d) = \Big( \mathcal{O}_X(1) \Big)^{\otimes d}$. On a alors
\[
P = \bigoplus_{d \geq 0} H^0(X, \mathcal{O}_X(d)).
\]

 On rappelle ensuite la version adique de la courbe de Fargues-Fontaine. Si $I = [\rho_1, \rho_2] \subset ] 0, 1 [$ avec $\rho_1, \rho_2 \in | F^{\times} | $ alors $B_{I}$ est une $E$-algèbre de Banach qui est un anneau principal. On pose alors $Y_{I} = \mathop{\mathrm{Spa}}\nolimits (B_I, B_I^{o})$ comme espace topologique muni d'un préfaisceau d'anneaux.
\begin{théorème} (\cite{Far16} 2.1)
	L'espace $ Y_{I} $ est adique i.e le préfaisceau $ \mathcal{O}_{Y_I} $ est un faisceau.
\end{théorème}
Soient $ I \subset I' \subset ] 0, 1 [ $ alors $Y_I$ est un ouvert rationnel de $Y_{I'}$. Si $ I = [|a|,|b|] $ avec $a, b \in F^{\times}$ alors 
\[
Y_I = Y_{I'} \Big( \dfrac{[a]}{\pi}, \dfrac{\pi}{[b]} \Big).
\]

On pose $Y = \varinjlim_I Y_I$ où $I$ parcourt les intervalles compacts de $ ] 0, 1 [ $ d'extrémités dans $ | F^{\times} | $.

C'est un espace adique tel que \ $ \Gamma(Y, \mathcal{O}_{Y}) = B $. On peut donc voir les éléments de $B_I$ comme les fonctions holomorphes de la variable $\pi$ sur la couronne de rayons définis par $I$. Le Frobenius $\varphi$ de $B^{bd}$ induit un isomorphisme $ \varphi : B_{I} \xrightarrow{\sim} B_{\varphi(I)} $ et on a donc un isomorphisme
\[
\varphi : Y_{\varphi(I)} \xrightarrow{\sim} Y_I.
\] 

En prenant la limite sur les intervalles $I$ on obtient un automorphisme
\[
\varphi : Y \xrightarrow{\sim} Y.
\]

Ce morphisme $\varphi$ agit de manière proprement discontinue sur $Y$ et on définit alors la courbe de Fargues-Fontaine adique de la manière suivante.
\begin{definition}
	La courbe de Fargues-Fontaine adique est $X^{\text{ad}} = Y / \varphi^{\Z}$.
\end{definition}

 On rappelle enfin la version relative de la courbe de Fargues-Fontaine sur un espace affinoïde perfectoïde $ S $. Intuitivement, on peut y penser comme une famille de courbes $ (X_{k(s)})_{s \in S} $.

Soit $S = \Spa(R, R^{+})$ un espace affinoïde perfectoïde sur $\Spa(\F_q)$. Posons
\[
\textbf{A}_S = W_{\mathcal{O}_E}(R^{+}) = \Big\{ \sum_{n \geq 0} [x_n] \pi^n \ | \ x_n \in R^{+} \Big\},
\]
où $W_{\mathcal{O}_E}$ désigne les vecteurs de Witt ramifiés. Notons
\[
B_S^{bd} = \Big( \textbf{A}_S \Big[ \dfrac{1}{\pi} \Big] \Big)^{bd} = \Big\{ \sum_{n \gg - \infty} [x_n] \pi^n \ | \ x_n \in R, \ \sup_n |x_n| < + \infty \Big\}.
\]

Pour $ \rho \in ] 0, 1 [ $, il y a une norme de Gauss sur $B_S^{bd}$ définie par $\| \sum_{n \gg - \infty} [x_n] \pi^n \|_{\rho} = \sup_{n} | x_n | \rho^n$.

De même, pour $ I \subset ] 0, 1 [ $ un intervalle compact, on note $B_{I, S}$ le complété de $B_S^{bd}$ par rapport aux normes $(\rVert \cdot \rVert_{\rho})_{\rho \in I}$. On note $ B_S = \varprojlim_I B_{I, S} $ le complété de $B_S^{bd}$ par rapport aux normes $(\rVert \cdot \rVert_{\rho})_{\rho \in ]0,1[}$.

Comme auparavant, $ Y_{I, S} = \Spa \Big(B_{I, S}, (B_{I, S})^o \Big) $ ainsi que $Y_S = \varinjlim_I Y_{I, S}$ sont des espaces adiques. Finalement, on définit la courbe de Fargues-Fontaine relative comme suit
\[
X_S^{ad} := Y_S / \varphi^{\Z}
\]
où $\varphi$ est le morphisme de Frobenius des vecteurs de Witt usuel
\[
\varphi \Big( \sum_{n \gg - \infty} [x_n]\pi_n \Big) = \sum_{n \gg - \infty} [x^q_n]\pi_n.
\]

Comme auparavant, il y a des des fibrés vectoriels $ \mathcal{O}_{X_S^{ad}}(d) $ pour tout $d \in \Z$ et on définit le schéma
\[
X_S = \mathop{\mathrm{Proj(P)}}\nolimits
\]
où $ P = \bigoplus_{d \geq 0} H^0 (X_S^{ad}, \mathcal{O}_{X_S^{ad}}(d)) $ et $ \mathcal{O}_{X_S^{ad}}(1) = \widetilde{ (B_{S})^{\varphi = \pi} } $. \\

\textbf{Débasculements et diviseurs de Cartier sur la courbe} \\

Soit $R$ une $\F_q$-algèbre perfectoïde. Notons $R^{o} \subset R$ le sous anneau des éléments de puissances bornées ainsi que $R^{oo} \subset R^{o}$ l'ensemble des éléments topologiquement nilpotents. Un élément $f = \sum_{n \geq 0} [a_n] \pi^n \in \textbf{A}_R $ est dit primitif de degré $1$ si $ a_0 \in R^{oo} \cap R^{\times} $ et $a_1 \in R^{o \times}$.
\begin{proposition} (prop. 1.18 de \cite{Far16}, \cite{Fon})
	\begin{enumerate}
		\item[$\bullet$] Soit $R^{\sharp}$ un débasculement de $R$ sur $E$. Alors, le noyau du morphisme de Fontaine
		\[
		\theta : W_{\mathcal{O}_E}(R^o) \twoheadrightarrow R^{\sharp, o}
		\]
		est engendré par un élément primitif de degré $1$.
		\item[$\bullet$] Inversement, si $f \in \textbf{A}_R$ est un élément primitif de degré $1$ alors $W_{\mathcal{O}_E}(R^o) \Big[ \dfrac{1}{\pi} \Big] / f$ est une $E$-algèbre perfectoïde qui est un débasculement de $R$.
	\end{enumerate}
\end{proposition}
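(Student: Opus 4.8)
\emph{Esquisse de la d\'emonstration propos\'ee.} Le plan est de faire passer, dans les deux sens, la correspondance entre d\'ebasculements et \'el\'ements primitifs par l'\'equivalence de basculement modulo $\pi$ et par la compl\'etude $\pi$-adique des anneaux de vecteurs de Witt. Pour le premier point, je commencerais par la surjectivit\'e de $\theta$. Comme $W_{\mathcal{O}_E}(R^o)$ et $R^{\sharp,o}$ sont $\pi$-adiquement complets et s\'epar\'es, et comme le Frobenius de $W_{\mathcal{O}_E}(R^o)$ est bijectif ($R^o$ \'etant parfait), donc que $W_{\mathcal{O}_E}(R^o)$ est sans $\pi$-torsion, il suffit de voir que $\theta \bmod \pi : R^o = W_{\mathcal{O}_E}(R^o)/\pi \to R^{\sharp,o}/\pi$ est surjectif. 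Or, via l'identification de basculement $R^o \simeq \varprojlim_{x \mapsto x^q} R^{\sharp,o}/\pi$ (limite inverse le long du Frobenius, qui est un morphisme d'anneaux en caract\'eristique $p$), ce morphisme est la projection sur la coordonn\'ee d'indice $0$, et la surjectivit\'e du Frobenius de $R^{\sharp,o}/\pi$ permet de relever tout \'el\'ement en un syst\`eme compatible.

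Je construirais ensuite un g\'en\'erateur primitif de $\ker\theta$. Puisque $R^{\sharp}$ est perfecto\"ide, on peut choisir une pseudo-uniformisante $\varpi^{\flat} \in R^o$ dont le d\'ebasculement $\varpi := (\varpi^{\flat})^{\sharp}$ engendre l'id\'eal $\pi R^{\sharp,o}$; \'ecrivant $\pi = \varpi u$ avec $u \in (R^{\sharp,o})^{\times}$, on rel\`eve $u^{-1}$ en $v \in W_{\mathcal{O}_E}(R^o)$ et l'on pose $\xi := \pi - [\varpi^{\flat}] v$. Alors $\theta(\xi) = \pi - \varpi u = 0$; le chiffre de Teichm\"uller $a_0 = \xi \bmod \pi$ vaut $-\varpi^{\flat}(v \bmod \pi)$, produit d'une pseudo-uniformisante et d'une unit\'e de $R^o$, donc $a_0 \in R^{oo} \cap R^{\times}$, tandis que $a_1 \equiv 1$ modulo les \'el\'ements topologiquement nilpotents (tous les chiffres de $[\varpi^{\flat}] v$ \'etant divisibles par $\varpi^{\flat}$), d'o\`u $a_1 \in R^{o\times}$ : ainsi $\xi$ est primitif de degr\'e $1$. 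Pour \'etablir $\ker\theta = (\xi)$ : comme $a_0$ est inversible dans $R$, c'est un non-diviseur de z\'ero de $R^o$, donc --- par $\pi$-torsion-libert\'e et s\'eparation $\pi$-adique de $W_{\mathcal{O}_E}(R^o)$ --- $\xi$ est un non-diviseur de z\'ero, et l'on montre que l'id\'eal $(\xi)$ est ferm\'e $\pi$-adiquement (l'identit\'e $(\xi) \cap (\pi^n) = \pi^n (\xi)$ donne un argument d'approximation successive), de sorte que $W_{\mathcal{O}_E}(R^o)/\xi$ est $\pi$-adiquement complet et sans $\pi$-torsion. Comme dans ce quotient $(\pi) = ([a_0])$ (gr\^ace \`a $a_1 \in R^{o\times}$), la r\'eduction de $\theta$ modulo $\pi$ devient le morphisme $R^o/(a_0) \to R^{\sharp,o}/\pi$, qui est un isomorphisme d'apr\`es l'\'equivalence de basculement modulo $\varpi^{\flat}$ (dont le d\'ebasculement engendre $\pi R^{\sharp,o}$); une r\'ecurrence sur les puissances de $\pi$ donne alors que $\theta$ induit un isomorphisme $W_{\mathcal{O}_E}(R^o)/\xi \xrightarrow{\sim} R^{\sharp,o}$.

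Pour le second point, soit $f = \sum_{n \geq 0} [a_n] \pi^n$ primitif de degr\'e $1$; je poserais $A^{+} := W_{\mathcal{O}_E}(R^o)/f$ et $A := A^{+}[1/\pi]$. \'Ecrivant $f = [a_0] + \pi w$ avec $w = [a_1] + \pi[a_2] + \cdots$, l'hypoth\`ese $a_1 \in R^{o\times}$ entra\^ine que $w$ est inversible dans $W_{\mathcal{O}_E}(R^o)$, donc que $(\pi) = ([a_0])$ dans $A^{+}$. Comme $a_0 \in R^{\times}$ est un non-diviseur de z\'ero de $R^o$, $f$ en est un de $W_{\mathcal{O}_E}(R^o)$, $A^{+}$ est sans $\pi$-torsion, et le m\^eme argument de fermeture d'id\'eal qu'au premier point montre que $A^{+}$ est $\pi$-adiquement complet. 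On a $A^{+}/\pi = W_{\mathcal{O}_E}(R^o)/(f,\pi) = R^o/(a_0)$; $R$ \'etant parfait, le Frobenius de $R^o/(a_0)$ est surjectif et $[a_0^{1/q}]^q = [a_0]$ engendre $(\pi)$ dans $A^{+}$, ce qui fait de $A$ une $E$-alg\`ebre perfecto\"ide. Enfin $A^{\flat,o} \simeq \varprojlim_{x \mapsto x^q} A^{+}/\pi = \varprojlim_{x \mapsto x^q} R^o/(a_0) \simeq R^o$ ($R^o$ \'etant parfait et $a_0$-adiquement complet), donc $A^{\flat} \simeq R$ et $A$ est un d\'ebasculement de $R$ sur $E$.

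Le point d\'elicat, dans les deux directions, est le contr\^ole de l'id\'eal principal $(\xi)$, respectivement $(f)$ : prouver qu'il est ferm\'e et que le quotient est $\pi$-adiquement complet et sans $\pi$-torsion. C'est l\`a que les hypoth\`eses de primitivit\'e servent pleinement --- $a_0$ pseudo-uniformisante pour en faire un non-diviseur de z\'ero, et $a_1$ inversible pour forcer $(\pi) = ([a_0])$ dans le quotient, ce qui est pr\'ecis\'ement ce qui rend ce dernier perfecto\"ide de bascul\'e $R$. On utilisera librement, comme relevant du formalisme de base des espaces perfecto\"ides, l'\'equivalence de basculement modulo une pseudo-uniformisante, l'identification $R^o \simeq \varprojlim_{x \mapsto x^q} R^{\sharp,o}/\pi$, ainsi que l'existence d'une pseudo-uniformisante de $R^{\sharp}$ engendrant $\pi R^{\sharp,o}$ et provenant du bascul\'e.
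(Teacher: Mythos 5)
Le texte ne d\'emontre pas cette proposition~: il la cite telle quelle (prop.~1.18 de \cite{Far16}, renvoyant \`a \cite{Fon}). Votre esquisse est donc \`a comparer aux preuves standard de ces r\'ef\'erences, dont elle reprend fid\`element la route~: surjectivit\'e de $\theta$ par r\'eduction modulo $\pi$ et compl\'etude, g\'en\'erateur explicite $\xi = \pi - [\varpi^{\flat}]v$, contr\^ole de l'id\'eal $(\xi)$ via $(\xi)\cap \pi^n W = \pi^n(\xi)$ et d\'evissage modulo $\pi$, puis, dans l'autre sens, v\'erification que $A^{+} = W_{\mathcal{O}_E}(R^o)/f$ est $\pi$-adiquement complet, sans $\pi$-torsion, avec $(\pi)=([a_0])$ et Frobenius surjectif modulo $\pi$, et calcul du bascul\'e $\varprojlim_{x\mapsto x^q} R^o/(a_0)\simeq R^o$. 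Sur le fond, c'est correct; trois r\'eserves cependant. (1) Lapsus~: avec $\pi = \varpi u$, c'est $u$ (et non $u^{-1}$) qu'il faut relever en $v$ pour obtenir $\theta(\xi)=0$; et il faut justifier que $v \bmod \pi$ est une unit\'e de $R^o$ (parce que $\theta(v)$ est une unit\'e de $R^{\sharp,o}$, donc $(v \bmod \pi)^{\sharp}$ aussi, et par multiplicativit\'e de $x\mapsto x^{\sharp}$ sur $\varprojlim_{x\mapsto x^q}R^{\sharp,o}$)~: c'est exactement ce qui donne $a_0 \in R^{oo}\cap R^{\times}$. (2) L'ingr\'edient que vous d\'eclarez utiliser librement --- l'existence de $\varpi^{\flat}\in R^{oo}\cap R^{\times}$ dont le d\'ebasculement engendre exactement $\pi R^{\sharp,o}$ --- n'est pas un simple fait de formalisme~: c'est un lemme \`a part enti\`ere (c'est l\`a que la surjectivit\'e du Frobenius modulo $\pi$ sert r\'eellement; voir \cite{Fon}, \cite{KL15}); il est vrai, mais une r\'edaction compl\`ete doit soit le d\'emontrer, soit r\'eorganiser l'argument en n'utilisant qu'une pseudo-uniformisante divisant $\pi$. (3) Dans la r\'eciproque, vous \'etablissez en substance que $A^{+}$ est \emph{int\'egralement perfecto\"{\i}de}, mais le passage \`a la d\'efinition au sens de Tate (uniformit\'e de $A=A^{+}[1/\pi]$, le fait que $A^{+}$ y soit ouvert born\'e et que $\varprojlim_{x\mapsto x^q}A^{+}/\pi$ calcule bien $A^{\flat,o}$ et non seulement un sous-anneau presque \'egal) n'est pas formel~: c'est l'\'etape o\`u les r\'ef\'erences recourent \`a un petit argument de presque-math\'ematiques. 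Ces points pr\'ecis\'es, l'architecture de votre d\'emonstration est bien celle de \cite{Fon} et elle aboutit.
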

On obtient ainsi une bijection entre l'ensemble des débasculements de $R$ sur $E$ et l'ensemble des immersions fermées $T \hookrightarrow Y_R$ définies localement par un élément primitif de degré $1$.

Autrement dit, si $S \in \Perf_{\F_q}$ alors à chaque débasculement $S^{\sharp}$ correspond un diviseur de Cartier $ D : S^{\sharp} \hookrightarrow Y_S $ et de plus ce diviseur de Cartier induit un diviseur de Cartier $\varphi$-invariant
\[
\bigoplus_{k \in \Z} \varphi^{k*} D
\]
et on obtient donc un diviseur de Cartier sur $X_S^{ad}$.
Supposons que $\overline{\F}_q \subset F$. Pour tout $h \in \N^*$, il existe une unique extension non ramifiée $E_h$ de degré $h$ de $E$. Notons $X_{E_h}$ la courbe adique associé à $E_h$, alors
\[
\pi_h : X_{E_h} = X_E \otimes_E E_h \longrightarrow X_E
\]
est un revêtement étale fini de degré $h$. 

Pour tout $\lambda = \dfrac{d}{h} \in \Q$ où $(d, h) = 1$ et $h > 0$, on définit un fibré vectoriel $\mathcal{O}_{X_E}(\lambda)$ par la formule
\[
\mathcal{O}_{X_E} (\lambda) = (\pi_h)_* \mathcal{O}_{X_{E_h}} (d).
\]

On peut définir les fonctions rang et degré sur les classes d'isomorphisme des fibrés vectoriels et puis appliquer le formalisme des filtrations de Harder-Narasimhan sur la catégorie de fibrés vectoriels. Pour $ \lambda = \dfrac{d}{h} $ avec $(d,h) = 1$ alors $\mathcal{O}_X(\lambda)$ est un fibré de degré $d$ et de rang $h$ et donc de pente de Harder-Narasimhan $\mu(\mathcal{O}_X(\lambda)) = \lambda$. 
\begin{definition}
	Un fibré vectoriel non nul $X$ est semi stable si pour tout sous fibré vectoriel strict non nul $X^{'}$ de $X$, on a $\mu(X^{'}) \leq \mu(X) $.
\end{definition}
\begin{théorème} ( \cite{FF} théo 8.5.1 ) \phantomsection \label{itm : clas}
	Supposons $F$ algébriquement clos.
	\begin{enumerate}
		\item[$\bullet$] Les fibrés semi-stables de pente $\lambda$ sur $X$, à isomorphisme près, sont les $\mathcal{O}_X(\lambda)^m$.
		\item[$\bullet$] La filtration de Harder-Narasimhan d'un fibré vectoriel sur $X$ est scindée.
		\item[$\bullet$] Tout fibré vectoriel sur $X$ est isomorphe à un fibré de la forme $\bigoplus_{i = 1}^n \mathcal{O}_X (\lambda_i)$ où $ \lambda_1 \geq \cdots \geq \lambda_n $ est une suite décroissante dans $\Q$.
	\end{enumerate}
\end{théorème}

Notons $\varphi$-$\Mod_B$ la catégorie des $\varphi$-modules libres sur $B$. On a une équivalence de catégories
\begin{align*}
\varphi-\Mod_B & \xrightarrow{\sim} Fib_X \\
(M, \varphi) & \longmapsto \widetilde{\Big( \bigoplus_{d \geq 0} M^{\varphi = \pi^d} \Big)}.
\end{align*}
D'autre part le foncteur sections globales implique une équivalence entre la catégorie des fibrés $\varphi$-équivariants sur $Y$ et celle des $\varphi$-modules sur $B$.
\begin{théorème}(\cite{Far14} théo. 3.5) Supposons $F$ algébriquement clos. Il y a une équivalence de catégories entre faisceaux cohérents sur $X$ et sur $X^{ad}$.  
\end{théorème}
Notons $\varphi$-$\Mod_{\breve{E}}$ la catégorie des isocristaux sur $\breve{E}$ où $\breve{E} := \widehat{E^{\text{nr}}}$. Il y a un foncteur naturel de $\varphi$- $\Mod_{\breve{E}}$ dans $Fib_X$. Soit $(D, \varphi)$ un isocristal, on pose
\[
\mathcal{E} (D, \varphi) = Y \times_{\varphi^{\Z}} D \longrightarrow Y / \varphi^{\Z} = X^{ad}
\]
qui est un fibré vectoriel sur $X^{ad}$. Via $GAGA$ cela correspond au fibré associé au $P$-module gradué $ \bigoplus_{d \geq 0} \big( D \otimes \mathcal{O}(Y) \big)^{\varphi \otimes \varphi = \pi^d} $. Le théorème \ref{itm : clas} implique que le foncteur $ \mathcal{E}( - ) : \varphi-\Mod_{\breve{E}} \longrightarrow Fib_X $ est essentiellement surjectif.
\begin{definition}
Soit $G$ un groupe réductif sur $E$. Notons $\Bun_X$ la catégorie des fibrés vectoriels sur $X$. Un $G$-fibré sur $X$ (ou $X^{ad}$) est un foncteur tensoriel exact
	\[
	\Rep_{E} G \longrightarrow \Bun_X.
	\]
\end{definition}
On considère l'ensemble de Kottwitz $B(G) = G(\breve{E}) / \sigma$-conj des classes d'isomorphismes de $G$-isocristaux. Si $b \in G(\breve{E})$ on peut lui associer un $G$-fibré sur $X$, que on note $\mathcal{E}_b$, par composition
\begin{align*}
\Rep(G) &\longrightarrow  \varphi-\Mod_{\breve{E}}  \xrightarrow{\mathcal{E}(-)} Bun_X \\
(V, \rho) &\longrightarrow  (V_{\breve{E}}, \rho(b)\sigma).
\end{align*}
\begin{théorème} (\cite{Far16} théorème 2.13)
	On suppose $F$ algébriquement clos. Il y a une bijection entre $B(G)$ et l'ensemble des $G$-fibrés.
	\begin{align*}
	B(G) & \xrightarrow{\sim} H^1_{et} (X, G) \\
	[b] & \longmapsto [\mathcal{E}_b].
	\end{align*}
	Cette bijection généralise le théorème \ref{itm : clas}.
\end{théorème}

Si $S \longrightarrow \Spa(\overline{\F}_q)$ un espace affinoïde perfectoïde et $b \in G(\breve{E})$, on définit comme auparavant un $G$-fibré $\mathcal{E}_b$ sur $X_S$,
\begin{align*}
\Rep(G) &\longrightarrow  \varphi-\Mod_{\breve{E}}  \xrightarrow{\mathcal{E}_S(-)} Bun_{X_S} \\
(V, \rho) &\longrightarrow  (V_{\breve{E}_p}, \rho(b)\sigma).
\end{align*}

Par abus de langage, on le note encore $\mathcal{E}_b$. De plus $\mathcal{E}_1$ est le $G$-fibré trivial.
\section{La $B_{dR}$-Grassmanienne affine} \label{itm : Grassmanienne affine}
 
Soient $S = \Spa (R, R^{+})$ affinoïde perfectoïde de caractéristique $p$ et $(S^{\sharp}, \iota)$ un débasculement de $S$ sur $\Q_p$ et donc un diviseur de Cartier $D_{S^{\sharp}} \hookrightarrow X_S$. On a une application surjective
\[
\theta : W(R^0) \longrightarrow (R^{\sharp})^{0}
\]
dont le noyau est engendré par $ \xi \in W(R^{0})$ qui n'est pas un diviseur de zéro. Alors $B^{+}_{dR}(R^{\sharp})$ est défini comme le complété $ \xi $-adique de $W(R^{0})\Big[\dfrac{1}{p}\Big]$ et $B_{dR}(R^{\sharp}) = B^{+}_{dR}(R^{\sharp})[\xi^{-1}]$. Le complété de $X_S$ au-dessus de $D_{S^{\sharp}}$ est alors $\Spf(B^{+}_{dR}(R^{\sharp}))$. Notons $B_e(R^{\sharp}) = H^{0}(X_S \setminus D_{S^{\sharp}}, \mathcal{O}_{X_S})$.

Etant donné un $G$-fibré $\mathcal{E}$, posons $\mathcal{E}_e$ sa restriction sur $X_S \setminus D_{S^{\sharp}} = \spec B_e(R^{\sharp})$ ainsi que $\mathcal{E}^{+}_{dR}$ son complété au-dessus de $D_{S^{\sharp}}$. La proposition suivante implique que $\mathcal{E}$ est déterminé par $\mathcal{E}_e$ et $\mathcal{E}^{+}_{dR}$.
\begin{proposition} \cite{BL95} (Recollement de Beauville-Laszlo)
	La catégorie des $G$-fibrés sur $X_S$ est équivalent à la catégorie des triplets $\Big( \mathcal{E}_e, \mathcal{E}^{+}_{B_{dR}}, \iota \Big)$ où $\mathcal{E}_e$ est un $G$-fibré sur $B_e(R^{\sharp})$, $\mathcal{E}^{+}_{B_{dR}}$ est un $G$-fibré sur $B^{+}_{dR}(R^{\sharp})$ et $\iota : \mathcal{E}_e \otimes_{B_e(R^{\sharp})} B_{dR}(R^{\sharp}) \xrightarrow{\sim} \mathcal{E}^{+}_{B_{dR}} \otimes_{B^{+}_{dR}(R^{\sharp})} B_{dR}(R^{\sharp})$ est un isomorphisme.
\end{proposition}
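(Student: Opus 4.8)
\emph{Strategy.} I will deduce the proposition from the classical Beauville--Laszlo glueing lemma \cite{BL95} for finite projective modules over a ring equipped with a non-zero-divisor, treating first the case of vector bundles (i.e. $G = GL_n$) and then passing to arbitrary reductive $G$ by the Tannakian formalism.

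\emph{Vector bundles.} By construction $X_S \setminus D_{S^{\sharp}} = \spec B_e(R^{\sharp})$ is affine, the completion of $X_S$ along the effective Cartier divisor $D_{S^{\sharp}}$ is $\Spf B^{+}_{dR}(R^{\sharp})$, with $B_{dR}(R^{\sharp}) = B^{+}_{dR}(R^{\sharp})[\xi^{-1}]$ the corresponding punctured formal neighbourhood, and the ideal of $D_{S^{\sharp}}$ is, locally on $X_S$, generated by a single non-zero-divisor (the local avatar of $\xi$). Beauville--Laszlo glueing then says that giving a vector bundle on $X_S$ is the same as giving a vector bundle $V_e$ on $B_e(R^{\sharp})$, a vector bundle $V^{+}_{dR}$ on $B^{+}_{dR}(R^{\sharp})$, and an isomorphism $\iota \colon V_e \otimes_{B_e(R^{\sharp})} B_{dR}(R^{\sharp}) \xrightarrow{\sim} V^{+}_{dR} \otimes_{B^{+}_{dR}(R^{\sharp})} B_{dR}(R^{\sharp})$: a bundle $\mathcal{E}$ maps to $(\mathcal{E}_e, \mathcal{E}^{+}_{B_{dR}}, \iota)$ with $\iota$ the tautological comparison along $D_{S^{\sharp}}$, and conversely one glues $V_e$ on $X_S \setminus D_{S^{\sharp}}$ to $V^{+}_{dR}$ on the formal neighbourhood along $\iota$. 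Full faithfulness follows from the cartesian square $H^0(X_S, \mathcal{O}) = B_e(R^{\sharp}) \times_{B_{dR}(R^{\sharp})} B^{+}_{dR}(R^{\sharp})$ and the same identity for the internal $\mathcal{H}om$ of two vector bundles, itself an instance of Beauville--Laszlo applied to endomorphism bundles.

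\emph{From $GL_n$ to $G$.} A $G$-fibré on $X_S$ is an exact tensor functor $\Rep_E G \to \Bun_{X_S}$, and the triples $(V_e, V^{+}_{dR}, \iota)$ above form an $E$-linear exact tensor category under componentwise tensor products, duals and kernels/cokernels; an exact tensor functor from $\Rep_E G$ into that category is precisely the datum of a $G$-fibré on $B_e(R^{\sharp})$, a $G$-fibré on $B^{+}_{dR}(R^{\sharp})$ and a $G$-equivariant glueing isomorphism after $\otimes B_{dR}(R^{\sharp})$. Since restriction to $X_S \setminus D_{S^{\sharp}}$, formal completion along $D_{S^{\sharp}}$ and base change to $B_{dR}(R^{\sharp})$ are exact tensor functors, and since the equivalence of the previous paragraph is manifestly compatible with tensor products, duals and exact sequences, composing it with an arbitrary exact tensor functor out of $\Rep_E G$ yields the asserted equivalence of categories of $G$-fibrés.

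\emph{Main obstacle.} The delicate point is that $X_S$ is far from Noetherian, so the Noetherian form of formal glueing is unavailable; what rescues the argument is exactly the property recalled just above, that the generator $\xi$ of the ideal of $D_{S^{\sharp}}$ is a non-zero-divisor, since the Beauville--Laszlo lemma is valid over an arbitrary base ring precisely under this hypothesis. The only genuine verification is therefore that the affine charts of $X_S$ meeting $D_{S^{\sharp}}$ are set up so that their $\xi$-adic completions are $B^{+}_{dR}(R^{\sharp})$ and that the local glueings are compatible; once this is in place, the construction of the functor, of its quasi-inverse, and the Tannakian bookkeeping are routine.
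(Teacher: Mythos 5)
The paper offers no proof of this proposition: it is quoted as a known result (Beauville--Laszlo \cite{BL95}, in the form for the relative curve and for $G$-bundles due to Scholze--Weinstein \cite{SW17}), and your sketch --- the non-Noetherian Beauville--Laszlo glueing along the Cartier divisor $D_{S^{\sharp}}$ for vector bundles, followed by the Tannakian passage from $GL_n$ to a general reductive $G$ using the definition of a $G$-fibr\'e as an exact tensor functor --- is exactly the standard argument behind that citation. Your outline is correct, including the identification of the two genuinely non-formal points (the non-zero-divisor hypothesis replacing Noetherianity, and the identification of the completion of $X_S$ along $D_{S^{\sharp}}$ with $\Spf B^{+}_{dR}(R^{\sharp})$), so there is nothing to compare against in the paper itself.
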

\begin{definition}
	Soit $\mathcal{E}$ un $G$-fibré sur $X_S$. Une modification de $\mathcal{E}$ au-dessus de $D_{S^{\sharp}}$ est un $G$-fibré $\mathcal{E}'$ avec un isomorphisme
	\[
	\alpha : \mathcal{E} |_{X_S \setminus D_{S^{\sharp}}} \xrightarrow{\sim} \mathcal{E}' |_{X_S \setminus D_{S^{\sharp}}}.
	\]
\end{definition}

En vertu du recollement de Beauville-Laszlo, une modification entre $\mathcal{E} = \Big( \mathcal{E}_e, \mathcal{E}^{+}_{B_{dR}}, \iota \Big)$ et $\mathcal{E}' = \Big( \mathcal{E}'_e, (\mathcal{E}')^{+}_{B_{dR}}, \iota' \Big)$ est donnée par un isomorphisme $ \alpha : \mathcal{E}_e \xrightarrow{\sim} \mathcal{E}'_e $.

Supposons maintenant $R^{\sharp} = C$ est un corps complet algébriquement clos. On a
\begin{align*}
X_{C^b}  = \mathop{\mathrm{Proj \Big( \bigoplus_{d \geq 0} (B_{C^b})^{\varphi = \pi^d} }}\nolimits \Big). 
\end{align*}
et le diviseur $ D_{C} \hookrightarrow X_{C^b} $ correspond à une injection $ \mathcal{O}_{X_{C^b}} \hookrightarrow \mathcal{O}_{X_{C^b}}(1) $ des fibrés en droites sur $X_{C^b}$ et donc un élément $t \in H^0(X_{C^b}, \mathcal{O}_{X_{C^b}}(1)) = (B_{X_{C^b}})^{\varphi = \pi} $.

Maintenant comme $B_e(C) = H^{0}(X_{C^b} \setminus D_{C}, \mathcal{O}_{X_S}) $ on voit que $ B_e(C) = \spec \Big( (B_{C^b}[t^{-1}])^{\varphi = \Id} \Big) $.

Soit $(D, \varphi)$ un isocristal, on y a associé un fibré vectoriel $ \mathcal{E}(D, \varphi) $. En terme du recollement de Beauville-Laszlo, $ \mathcal{E}(D, \varphi) $ est donné par
\[
\Big( (B_{C^b}[t^{-1}] \otimes_{\breve{E}} D )^{\varphi \otimes \varphi = \Id}  , (B^{+}_{dR}(C))^n, \iota \Big)
\] 
où $\iota$ est le morphisme trivial 
\[
(B_{dR}(C))^n \xrightarrow{\sim} (B^{+}_{dR}(C))^n \otimes B_{dR}(C).
\]

On suppose désormais que $G$ est un groupe réductif connexe défini sur $\Q_p$.

\begin{definition} \cite{SW17}
	La $B_{dR}$-Grassmanienne affine $Gr^{B_{dR}}_{G}$ est la faisceautisation étale du foncteur qui à un espace affinoïde perfectoïde $ S = \Spa(R, R^{+})$ avec un morphisme $S \longrightarrow \Spd \breve{\Q}_p$ correspondant à un débasculement $ S^{\sharp} = \Spa(R^{\sharp}, (R^{\sharp})^{+})$, associe l'ensemble des classes d'isomorphismes de $G$-torseurs sur $\spec B^{+}_{dR}(R^{\sharp})$ trivialisés sur $\spec B_{dR}(R^{\sharp})$.	
\end{definition}

Comme tout $G$-torseur sur $\Spa(R^{\sharp}, (R^{\sharp})^{+})$ devient trivial sur un recouvrement étale de $\Spa(R^{\sharp}, (R^{\sharp})^{+})$, on en déduit que $Gr^{B_{dR}}_G$ est le faisceau étale associé au préfaisceau
\[
(R^{\sharp}, (R^{\sharp})^{+}) \longmapsto G(B_{dR}(R^{\sharp})) / G(B^{+}_{dR}(R^{\sharp})).
\]

Notons $G^*$ le groupe réductif forme quasi-déployée de $G$ (i.e $G_{\overline{\Q}_p} \simeq G^*_{\overline{\Q}_p} $) ainsi que $T \subset B \subset G^*$ un tore maximal contenu dans un sous groupe de Borel de $G^*$. 

Posons $X_*(G) := \hom (\G_{m, \overline{\Q}_p}, G_{\overline{\Q}_p})$. Le groupe de Galois $\Gamma = \Gal(\overline{\Q}_p / \Q_p) $ ainsi que $G(\overline{\Q}_p)$ agissent sur $X_*(G)$ et on a 
\[
\Gamma \backslash \Big[ X_*(G^*) / G^*(\overline{\Q}_p) \Big] = \Gamma \backslash\Big[ X_*(G) / G(\overline{\Q}_p) \Big] = \Gamma\backslash X_*(T)^+.
\]

Pour $S = \Spa (C, C^+)$ avec $C / \Q_p $ un corps perfectoïde algébriquement clos alors $B^{+}_{dR}(C)$ est un anneau de valuation discrète et on voit que
\[
Gr^{B_{dR}}_{G}(C, C^+) = G(B_{dR}(C)) / G(B^{+}_{dR}(C)).
\]

On a la décomposition de Cartan
\[
G(B_{dR}(C)) = \bigsqcup_{\mu \in X_{*}(T)^+} G(B^{+}_{dR}(C)) \mu(\xi)^{-1} G(B^{+}_{dR}(C)). 
\]
\begin{notation}
Notons $\leq$ l'ordre de Bruhat sur $X_{*}(T)^+$, i.e $\mu' \leq \mu$ si et seulement si $ \mu - \mu' $ est une somme des co-racines positives avec des coefficients positifs rationnels.	
\end{notation}
\begin{definition} \cite{SW17} - 19.2.2. (variétés de Schubert).
	
	Pour un cocaractère $\mu \in X_{*}(T)^+$, notons $E$ son corps de définition et $ \breve{E} := E \cdot \breve{\Q} $. Considérons les sous-foncteurs
	\[
	Gr^{B_{dR}}_{G, \mu} \subset Gr^{B_{dR}}_{G, \leq \mu} \subset Gr^{B_{dR}}_G
	\]
	qui sont définis par les conditions qu'un morphisme $S \longrightarrow Gr^{B_{dR}}_G$ avec un morphisme $S \longrightarrow \Spd\breve{E}$ où $S \in \Perf_{\overline{\F}_p}$ se factorise par $Gr^{B_{dR}}_{G,  \mu}$ resp. $Gr^{B_{dR}}_{G, \leq \mu}$ si et seulement si pour tout point géométrique $ x = \Spa(C(x), C(x)^+) \longrightarrow S $, les éléments correspondant dans $Gr^{B_{dR}}_G(C(x)) = \bigsqcup_{\mu \in X_{*}(T)^+} G(B^{+}_{dR}(C)) \mu(\xi)^{-1} G(B^{+}_{dR}(C)) $ appartiennent à $ G(B^{+}_{dR}(C)) \mu(\xi)^{-1} G(B^{+}_{dR}(C)) $, resp. à $\bigsqcup_{\mu' \leq \mu} G(B^{+}_{dR}(C)) \mu'(\xi)^{-1} G(B^{+}_{dR}(C)) $.   
\end{definition}

On dispose alors du théorème suivant.

\begin{théorème} \phantomsection \label{itm : Grass}
	(\cite{SW17})
	
	Pour tout $\mu \in X_{*}(T)^+$, $Gr^{B_{dR}}_{G, \leq \mu}$ est un diamant spatial. Le sous-foncteur $Gr^{B_{dR}}_{G, \leq \mu} \subset Gr^{B_{dR}}_G$ est un sous-foncteur fermé qui est propre sur $\Spa(\breve{E})^{\diamond}$ et $Gr^{B_{dR}}_{G, \mu} \subset Gr^{B_{dR}}_{G, \leq \mu}$ est un sous-foncteur ouvert. De plus le morphisme $ Gr^{B_{dR}}_{G, \leq \mu} \longrightarrow \Spa(\breve{E})^{\diamond} $ est de dimension de transcendance finie (i.e $\dimtrg < \infty $). 
\end{théorème}
\begin{proof}
	Toutes les assertions, sauf la dernière, sont démontrées dans la proposition 19.2.3 et le théorème 19.2.4 de \cite{SW17}. L'assertion sur la finitude de la dimension de transcendance est démontrée implicitement dans les lemmes 19.3.2 et 19.3.3.
	
	D'après le lemme 19.1.5 de loc.cit, toute injection de groupes réductifs $ G \hookrightarrow GL_n $ induit une immersion fermée $Gr^{B_{dR}}_{G, \leq \mu} \hookrightarrow Gr^{B_{dR}}_{GL_n, \leq \mu}$. On peut donc supposer $G = GL_n$.
	 
	Puisqu'il y a un nombre fini de $\mu' \leq \mu$ et que chaque point $x \in | Gr^{B_{dR}}_{GL_n, \leq \mu} |$ appartient à $| Gr^{B_{dR}}_{GL_n, \mu'} |$ pour un $\mu' \leq \mu$, il suffit de montrer la dernière assertion pour $Gr^{B_{dR}}_{GL_n, \mu}$.
	
	En vertu du lemme 19.3.3 de loc.cit., il suffit de considérer la résolution de Demazure $\widetilde{Gr}_{GL_n, \mu}$ de $Gr^{B_{dR}}_{GL_n, \mu}$ (définition 19.3.1 de loc.cit).	Nous allons suivre la suite de réductions du lemme 19.3.2. Le lemme 21.3 de \cite{S17} nous permet de supposer que $\mu$ est minuscule. Ensuite, la question 21.4\footnote{Nous utilisons la définition au début du page 120 et le lemme 21.3 ainsi que la question 21.4 sont donc vérifiés.} de loc.cit nous permet de passer au recouvrement ouvert et comme dans le lemme 19.3.2 de \cite{SW17}, il suffit de considérer le morphisme $Grass(d,n)^{\diamond} \longrightarrow \Spa(\breve{E})^{\diamond}$. Dans ce cas, le résultat découle de \cite{Hub96}. 
\end{proof}
\section{L'espace de modules de Shtukas}
Supposons maintenant que le $G$-fibré $\mathcal{E}$ est trivialisé sur $B^{+}_{dR}(R^{\sharp})$. En terme du recollement de Beauville-Laszlo, $\mathcal{E}$ correspond à un triplet $\Big( \mathcal{E}_e, \mathcal{E}^{+}_{1, B_{dR}}, \iota \Big)$ où $\mathcal{E}^{+}_{1, B_{dR}}$ est le $B^{+}_{dR}$-réseau trivial dans $\mathcal{E}_{1, B_{dR}(R^{\sharp})}$ et où $\iota$ est un isomorphisme
\begin{equation} \phantomsection \label{itm : 1}
\iota : \mathcal{E}_e \otimes_{B_e(R)} B_{dR}(R^{\sharp}) \xrightarrow{\sim} \mathcal{E}^{+}_{1, B_{dR}} \otimes_{B^{+}_{dR}(R^{\sharp})} B_{dR}(R^{\sharp}) \ \simeq \ \mathcal{E}_{1, B_{dR}}.	
\end{equation}

Autrement dit, un fibré $\mathcal{E}$ trivialisé sur $B^{+}_{dR}(R^{\sharp})$ est déterminé par un couple $\Big( \mathcal{E}_e, \iota \Big)$ avec $\iota$ comme décrit dans (\ref{itm : 1}). 
Soit $R^{\sharp} = C$ un corps perfectoïde algébriquement clos ainsi que $R = C^b$. Considérons une modification
\[
\alpha : \mathcal{E}_{b_1}|_{X_{C^b} \setminus D_C} \xrightarrow{\sim} \mathcal{E}_{b_2}|_{X_{C^b} \setminus D_C}
\]
où $b_1, b_2 \in G(\breve{\Q}_p)$.

Puisque $\mathcal{E}_{b_1}$ et $\mathcal{E}_{b_2}$ possèdent une trivialisation naturelle sur $B^{+}_{dR}(C)$, la modification $\alpha$ induit un automorphisme $ g = \iota_{b_2} \circ  \Big( \alpha \otimes \id_{B_{dR}(C)} \Big) \circ \iota_{b_1}^{-1} $ de $\mathcal{E}_{1, B_{dR}}$. D'après la décomposition de Cartan, il existe un unique $ \mu \in X_{*}(T)^+ $ de sorte que $ g \in G(B^{+}_{dR}(C)) \mu(t)^{-1} G(B^{+}_{dR}(C)) $. On dit alors que la modification $\alpha$ est de type $\mu$. \\

Revenons à la situation générale où $R$ est une algèbre affinoïde perfectoïde et $R^{\sharp}$ un débasculement de $R$. Une modification
\[
\alpha : \mathcal{E} |_{X_S \setminus D_{S^{\sharp}}} \xrightarrow{\sim} \mathcal{E}' |_{X_S \setminus D_{S^{\sharp}}}
\]
est dite de type $\mu$ si et seulement si pour tout point géométrique $ x_{C^b} : C^b \longrightarrow \Spa(R, R^{+}) $, la modification $ x^{*}_{C^b} \alpha $ est de type $\mu$.
\begin{definition}
	Supposons nous donné un triplet $(G, \mu, b)$ où $G$ est un groupe réductif sur $\Q_p$, $ b \in G(\breve{\Q}_p) $ et $\mu \in X^{+}_{*}(T)$ avec $b \in B(G, \mu)$. Notons $E$ le corps de définition de $\mu$. L'espace de modules de Shtukas $\Sht(G, \mu, b) \longrightarrow \Spa(\breve{E})^{\diamond}$ est le préfaisceau sur $\Perf_{\overline{\F}_p}$ qui associe à chaque $\breve{E}_p$-espace perfectoïde $S^{\sharp}$, avec un débasculement $(S^{\sharp})^b = S$, l'ensemble des modifications
	\[
	\alpha : \mathcal{E}_b |_{X_{S} \setminus D_{S^{\sharp}}} \xrightarrow{\sim} \mathcal{E}_1 |_{X_{S} \setminus D_{S^{\sharp}}}
	\]
	qui est de type $\mu'$ plus petit que $\mu$.
\end{definition}
Le foncteur $\Sht(G, \mu, b)$ admet une action de $G(\Q_p)$, respectivement de $J_b(\Q_p)$ via $ \alpha \longmapsto g \circ \alpha $ pour $g \in G(\Q_p)$, respectivement $ \alpha \longmapsto \alpha \circ h^{-1} $ pour $h \in J_b(\Q_p)$.

Puisque $b^{\sigma} = b^{-1} \cdot b \cdot b^{\sigma}$ alors $ [b] = [b^{\sigma}] $ dans $B(G)$. Il y a donc un isomorphisme canonique
\[
\widetilde{\sigma} : \mathcal{E}_{b^{\sigma}} \xrightarrow{\sim} \mathcal{E}_{b}.
\]

On définit alors la donnée de descente de $\Sht(G, \mu, b)$ comme l'isomorphisme
\begin{align*}
\widetilde{\mathop{\mathrm{Fr}}\nolimits} : \Sht(G, \mu, b) & \xrightarrow{\sim} \Sht(G, b^{\sigma}, \mu) \\
\alpha & \longmapsto \alpha \circ \widetilde{\sigma}.
\end{align*}

Les actions de $G(\Q_p)$ et $J_b(\Q_p)$ commutent avec la donnée de descente. 
\begin{théorème} \phantomsection \label{itm : Shtukas} \cite{SW17} Étant donné un triplet $(G, b, \mu)$ comme ci-dessus alors $\Sht(G, \mu, b)$ est un diamant localement spatial. De plus, le morphisme $ f : \Sht(G, \mu, b) \longrightarrow \Spa(\breve{E})^{\diamond} $ est partiellement propre et $ \dimtrg f < \infty $. 
\end{théorème}
\begin{proof}
	On a en effet $ \displaystyle \Sht(G, \mu, b) = \mathop{\mathrm{lim}}_{\overleftarrow{K}} \Sht(G, \mu, b)_K$, le théorème 23.1.3 de \cite{SW17} implique alors que $\Sht(G, \mu, b)$ est un diamant localement spatial. D'autre part, la proposition 23.2.1 de loc.cit couplée avec \ref{itm : Grass} implique que $f$ est partiellement propre et $ \dimtrg f < \infty $.
\end{proof}

La proposition suivante est bien connue des experts.

\begin{proposition} \phantomsection \label{itm : modification de type 0}
	Soit $G$ un groupe réductif connexe défini sur $\Q_p$ et $b \in G(\breve{\Q}_p)$. Soient $S = \Spa(R, R^+)$ un espace affinoïde perfectoïde sur $\Spa(\overline{\F}_p)$ ainsi que $\Spa(R^{\sharp}, (R^{\sharp})^+)$ un débasculement. Si $ \alpha : \mathcal{E}_{b | X_{S} \setminus D_{S^{\sharp}} } \xrightarrow{\sim} \mathcal{E}_{b | X_{S} \setminus D_{S^{\sharp}} }$ est une modification de $G$-fibrés de type $\mu = 0$ alors $\alpha$ s'étend en un isomorphisme de $G$-fibrés. En particulier on a une identification $ \Sht(G, \Id, 0) \simeq \underline{G(\Q_p)}. $ 
\end{proposition}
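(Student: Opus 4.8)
The key point is that a modification of type $\mu = 0$ is an isomorphism that is already invertible on the open complement $X_S \setminus D_{S^\sharp}$ and whose Cartan position at every geometric point is trivial; the plan is to upgrade this pointwise triviality to an honest extension across the divisor via the Beauville-Laszlo gluing and a reduction to the $GL_n$ case. First I would embed $G \hookrightarrow GL_n$ as a closed subgroup; since a $G$-bundle on $X_S$ is a tensor functor $\Rep_E G \to \Bun_{X_S}$ and $G$-torseurs can be recovered from the induced $GL_n$-torseur together with the reduction data, it suffices to treat $G = GL_n$, i.e. to prove the statement for vector bundles. Then, using Beauville-Laszlo, $\mathcal{E}_b = (\mathcal{E}_{b,e}, \mathcal{E}^+_{b,B_{dR}}, \iota_b)$ and the modification $\alpha$ is encoded by an isomorphism $\alpha_e : \mathcal{E}_{b,e} \xrightarrow{\sim} \mathcal{E}_{b,e}$ on $X_S \setminus D_{S^\sharp}$; after base change to $B_{dR}(R^\sharp)$ it induces $g \in GL_n(B_{dR}(R^\sharp))$, and the hypothesis that $\alpha$ has type $0$ says precisely that at every geometric point $x_{C^b}$ the element $g_x$ lies in $GL_n(B^+_{dR}(C))\mu(\xi)^{-1} GL_n(B^+_{dR}(C))$ with $\mu = 0$, i.e. $g_x \in GL_n(B^+_{dR}(C))$.

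Next I would argue that this pointwise integrality forces $g \in GL_n(B^+_{dR}(R^\sharp))$: the "elementary divisors" of $g$ relative to the lattice $\mathcal{E}^+_{1,B_{dR}}$ are a locally constant function on $|S|$ (this is part of the definition of $Gr^{B_{dR}}_{G,\mu}$ being a subfunctor, cf.\ the discussion preceding Theorem \ref{itm : Grass}), so triviality at all geometric points gives triviality over $B^+_{dR}(R^\sharp)$. Concretely, $g$ being in the identity Schubert cell means the modified $B^+_{dR}$-lattice $g \cdot \mathcal{E}^+_{1,B_{dR}}$ equals $\mathcal{E}^+_{1,B_{dR}}$, so $\alpha$ identifies not only $\mathcal{E}_{b,e}$ with itself away from the divisor but also the two completions $\mathcal{E}^+_{b,B_{dR}}$ and $\mathcal{E}^+_{1,B_{dR}}$ along $D_{S^\sharp}$ compatibly with $\iota$. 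By the equivalence of categories in the Beauville-Laszlo gluing, the pair $(\alpha_e, \mathrm{id})$ then glues to a genuine morphism of $G$-bundles $\mathcal{E}_b \to \mathcal{E}_1$ on all of $X_S$, which is an isomorphism since it is so after restriction to both $X_S \setminus D_{S^\sharp}$ and the formal completion along $D_{S^\sharp}$.

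For the last assertion, unwinding the definition: $\Sht(G, \Id, 0)$ sends $S^\sharp$ (with untilt $S = (S^\sharp)^b$) to the set of modifications $\mathcal{E}_0|_{X_S \setminus D_{S^\sharp}} \xrightarrow{\sim} \mathcal{E}_1|_{X_S \setminus D_{S^\sharp}}$ of type $0$, but $\mathcal{E}_0 = \mathcal{E}_1$ is the trivial $G$-bundle and by the first part every such modification extends uniquely to an automorphism of the trivial $G$-bundle on $X_S$, i.e.\ an element of $G\big(H^0(X_S, \mathcal{O}_{X_S})\big)$; since $H^0(X_S, \mathcal{O}_{X_S}) = \Q_p$ on a connected perfectoid base (and locally constant $\Q_p$-valued on a general $S$), this set is canonically $\underline{G(\Q_p)}(S^\sharp)$, functorially in $S^\sharp$ and compatibly with the descent datum. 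The main obstacle is the middle step — passing from pointwise triviality of the relative position to triviality over the (non-discretely-valued) ring $B^+_{dR}(R^\sharp)$ — which is exactly why one reduces to $GL_n$ and invokes the local constancy built into the Schubert-cell stratification of $Gr^{B_{dR}}_G$; everything else is bookkeeping with Beauville-Laszlo and the definitions.
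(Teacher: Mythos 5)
Your overall route is the same as the paper's: reduce to $GL_n$ (the paper does this via the formalisme tannakien rather than a closed embedding, but this is immaterial), use Beauville--Laszlo to encode $\alpha$ by an element $g$ of $GL_n(B_{dR})$, observe that type $0$ means $g$ lands in $GL_n(B^+_{dR}(C))$ at every geometric point, and conclude by gluing; the identification $\Sht(G,\Id,0)\simeq\underline{G(\Q_p)}$ is then obtained, as in the paper, from the description of automorphisms of the trivial $G$-bundle (the paper cites l'exemple 2.21 de \cite{Far16} for this).

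The one step where you diverge is the globalization from geometric points to a general affinoid perfectoid base, and there your justification is not a proof. You assert that pointwise triviality of the relative position forces $g\in GL_n(B^+_{dR}(R^{\sharp}))$ because ``this is part of the definition of $Gr^{B_{dR}}_{G,\mu}$ being a subfunctor'' / by local constancy of elementary divisors; but the subfunctors $Gr^{B_{dR}}_{G,\mu}\subset Gr^{B_{dR}}_{G,\leq\mu}$ are \emph{defined} by the condition at geometric points, so invoking that definition says nothing about the lattice over $B^+_{dR}(R^{\sharp})$ itself. The content you need is precisely that the Schubert cell for $\mu=0$ is the base point, i.e.\ that a $B^+_{dR}(R^{\sharp})$-lattice in $B_{dR}(R^{\sharp})^n$ which is standard at every geometric point is standard, equivalently that an element of $B_{dR}(R^{\sharp})$ whose image in $B_{dR}(C(x))$ lies in $B^+_{dR}(C(x))$ for all $x$ lies in $B^+_{dR}(R^{\sharp})$. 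This is true, but it requires an argument (for instance: writing $b=\xi^{-N}b_0$ with $b_0\notin\xi B^+_{dR}(R^{\sharp})$, the pointwise condition forces the image of $b_0$ in $B^+_{dR}(R^{\sharp})/\xi\simeq R^{\sharp}$ to vanish at every point of $\Spa(R^{\sharp},R^{\sharp+})$, hence to vanish by uniformity of the perfectoid ring $R^{\sharp}$, a contradiction; and similarly for $g^{-1}$) or a citation. The paper bypasses this by treating only the case $R=C^{b}$ directly and then invoking le lemme 3.4.6 de \cite{CS} to extend the isomorphism over a general affinoid perfectoid $S$. So: same strategy, but your middle step is currently asserted rather than proved; supply the lattice argument above or the reference, and the proof is complete.
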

\begin{proof}
	D'après le formalisme tanakien, il suffit de démontrer le résultat pour $G = GL_n$.
	
	Supposons tout d'abord que $\Spa(R, R^0) = \Spa(C^b, \mathcal{O}_{C^b})$ où $C$ est un corps complet algébriquement clos. En utilisant le recollement de Beauville-Laszlo, on peut exprimer $ \mathcal{E}_b $ sous forme 
	\[
	\Big( (B_{C^b}[t^{-1}] \otimes_{\breve{\Q}_p} \breve{\Q}^n_p)^{\varphi \otimes \Id b\sigma} , \big( B^{+}_{dR}(C) \big)^n \Big).
	\]
	
	Comme la modification $\alpha$ est de type $0$, on en déduit que $ g := \big( \alpha \otimes \Id_{B_{dR}(C)} \big) $ est donné par un élément dans $GL_n(B^{+}_{dR}(C))$. En particulier le couple $(\alpha, g)$ est un isomorphisme de $\mathcal{E}_b$.
	
	Le cas particulier où $R = C^b$ couplé avec le lemme 3.4.6 de \cite{CS} implique le cas général où $\Spa(R, R^+)$ est un espace affinoïde perfectoïde.
	
	Enfin, on voit que $\Sht(G, \Id, 0)$ classifie les automorphismes du $G$-fibré trivial $\mathcal{E}_1$. D'après l'exemple 2.21 de \cite{Far16}, on a une identification de diamants $\Sht(G, \Id, 0) = \underline{G(\Q_p)}$. 
	
\end{proof}
\section{Torsions des modifications des $G$-fibrés}

Soient $X$ un schéma et $G$ un $X$-schéma en groupes. Un $G$-torseur est un $\Q_p$-morphisme fidèlement plat $\mathcal{T} \longrightarrow X$ muni d'une action $ G \times \mathcal{T} \longrightarrow \mathcal{T} $ sur l'action triviale de $X$ de sorte que, fppf-localement sur $X$, on a un isomorphisme $G$-équivariant $ \mathcal{T} \simeq G \times X $.
\begin{lemme} (lemme 4.6.1 de \cite{KW}) \phantomsection \label{itm : G-tor G-fib}
	Soit $G \longrightarrow X$ un schéma en groupes réductifs. La catégorie des $G$-fibrés sur $X$ est équivalente à celle des $G$-torseurs sur $X$.
\end{lemme}

Soit $S = \Spa(R, R^+)$ un espace affinoïde perfectoïde sur $\Spa(\F_q)$. Considérons maintenant la courbe de Fargues-Fontaine $ X_S $. Pour chaque $b \in G(\breve{\Q}_p)$, on a un $G$-fibré $\mathcal{E}_b$ sur $X_S$. Notons $\mathcal{T}_b$ le $G$-torseur correspondant à $\mathcal{E}_b$ via le lemme \ref{itm : G-tor G-fib}. On peut décrire ce $G$-torseur par la formule
\begin{equation} \phantomsection \label{itm : réaliser torseur}
\mathcal{T}_b = \Proj \bigoplus_{d \geq 0} \Big( H^0 (Y_S, \mathcal{O}_{Y_S}) \otimes_{\breve{\Q}_p} \breve{\Q}_p[G] \Big)^{\varphi_S \otimes b\sigma = \pi^d}	
\end{equation}
où $\breve{\Q}_p[G]$ est l'algèbre de définition de $G_{\breve{\Q}_p}$. L'action de $G$ sur $\mathcal{T}_b$ est donnée par celle de $G$ sur lui même par translation à  droite. Désormais, notons $\lambda_{g}$ et $\rho_g$ respectivement la multiplication à gauche par $g$ et à droite par $g^{-1}$.

	Étant donné un groupe réductif connexe $G$ défini sur $\Q_p$ et $ \iota : Z_G^0 \hookrightarrow G $ la composante connexe neutre du centre $Z_G$. On va construire un morphisme naturel 
	\[
	\Bun_G \times \Bun_{Z^0_G} \longrightarrow \Bun_G.
	\]
	
	D'après le lemme \ref{itm : G-tor G-fib}, it suffit de définir le morphisme au niveau des torseurs. Considérons $\mathcal{T}$ un $G$-torseur ainsi que $\mathcal{T}'$ un $Z_G^0$-torseur sur $X$. Puisque $Z_G^0$ est contenu dans le centre de $G$, on peut munir $\mathcal{T}$ d'une action de $Z_G^0$ telle que l'action de $G$ et de $Z_G^0$ commutent. Plus précisément, un élément $g \in Z_G^0$ agit par action de $g$ sur $\mathcal{T}$. Le produit contracté $ \mathcal{T} \times_{Z_G^0} \mathcal{T}'$ est alors un $G$-torseur. 
\begin{proposition} \phantomsection \label{itm : construction fondamentale}	
	 Pour $b \in B(G)$ et $h \in B(Z_G^0)$, l'image de $(\mathcal{E}_b, \mathcal{E}_h)$ via le morphisme ci-dessus est $\mathcal{E}_{b \overline{h}}$ où $\overline{h} = \iota(h)$.	
\end{proposition}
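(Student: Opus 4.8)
Le plan est de ramener l'\'egalit\'e \`a une v\'erification au niveau des torseurs sur $Y_S$, o\`u tous les objets en jeu deviennent triviaux. D'apr\`es le lemme \ref{itm : G-tor G-fib}, il suffit de traiter les $G$-torseurs $\mathcal{T}_b$, $\mathcal{T}_h$ (ce dernier un $Z_G^0$-torseur) et $\mathcal{T}_{b\overline h}$, puis de construire un isomorphisme $G$-\'equivariant $\mathcal{T}_b \times_{Z_G^0} \mathcal{T}_h \xrightarrow{\sim} \mathcal{T}_{b\overline h}$. Via l'\'equivalence entre torseurs sur $X_S^{ad}$ (resp. sur $X_S$, par $GAGA$) et torseurs $\varphi$-\'equivariants sur $Y_S$, et puisque le produit contract\'e commute avec le changement de base plat $Y_S \to X_S^{ad}$ ainsi qu'avec la descente le long de $Y_S \to Y_S / \varphi^{\Z}$, il suffit de travailler avec les tir\'es en arri\`ere sur $Y_S$ munis de leur donn\'ee de descente.

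Sur $Y_S$, le torseur $\mathcal{T}_b|_{Y_S}$ est le torseur trivial $Y_S \times_{\breve{\Q}_p} G$, muni de l'action de $G$ par translation \`a droite et de la donn\'ee de descente $\varphi_{Y_S} \times (b \cdot \sigma)$, o\`u $b$ agit par multiplication \`a gauche; de m\^eme $\mathcal{T}_h|_{Y_S} = Y_S \times_{\breve{\Q}_p} Z_G^0$ avec donn\'ee de descente $\varphi_{Y_S} \times (h \cdot \sigma)$, l'action de $Z_G^0$ sur $\mathcal{T}_b$ \'etant la restriction via $\iota$ de l'action \`a droite de $G$. Le produit contract\'e s'identifie alors \`a
\[
\big( Y_S \times G \big) \times_{Z_G^0} \big( Y_S \times Z_G^0 \big) \;\xrightarrow{\sim}\; Y_S \times G, \qquad (y, g\,;\, y, z) \longmapsto (y,\, g\,\iota(z)),
\]
morphisme bien d\'efini (car $(g\,\iota(u),\, u^{-1} z) \mapsto g\,\iota(u)\,\iota(u)^{-1}\,\iota(z) = g\,\iota(z)$) et $G$-\'equivariant puisque $\iota(Z_G^0)$ est central. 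Il reste \`a voir qu'il transporte la donn\'ee de descente produit sur celle de $\mathcal{T}_{b\overline h}$: pour $(g,z)$ on calcule $b\,\sigma(g)\cdot \iota\big(h\,\sigma(z)\big) = b\,\iota(h)\,\sigma(g)\,\sigma(\iota(z)) = b\,\overline h\,\sigma\big(g\,\iota(z)\big)$, la deuxi\`eme \'egalit\'e r\'esultant de la centralit\'e de $\overline h = \iota(h)$ et du fait que $\iota$ est d\'efini sur $\Q_p$. C'est pr\'ecis\'ement la donn\'ee de descente de $\mathcal{T}_{b\overline h}|_{Y_S}$.

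En redescendant \`a $X_S^{ad}$ puis \`a $X_S$, on obtient l'isomorphisme $G$-\'equivariant $\mathcal{T}_b \times_{Z_G^0} \mathcal{T}_h \simeq \mathcal{T}_{b\overline h}$, d'o\`u $\mathcal{E}_b \times_{Z_G^0} \mathcal{E}_h \simeq \mathcal{E}_{b\overline h}$; comme toutes les constructions sont fppf-locales sur $X_S$, on peut tout aussi bien lire ce calcul directement sur la description explicite de $\mathcal{T}_b$ rappel\'ee plus haut. Le point d\'elicat est essentiellement comptable: suivre avec soin les actions \`a gauche et \`a droite, s'assurer que le produit contract\'e commute avec les changements de base et la descente intervenant dans la construction du foncteur $\mathcal{E}(-)$, et v\'erifier l'uniformit\'e de ces identifications en $S$.
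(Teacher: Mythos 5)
Your proof is correct and follows essentially the same route as the paper: the paper's argument writes $\mathcal{T}_b$ through its $\varphi_S \otimes b\sigma$-equivariant description coming from $Y_S$ and concludes from the commutative square relating the multiplication $\mu$, $\lambda_b \times \rho_{\overline{h}^{-1}}$ and $\lambda_{b\overline{h}}$ (equivalently, the co-multiplication $\breve{\Q}_p[G] \to \breve{\Q}_p[G]\otimes_{\breve{\Q}_p}\breve{\Q}_p[Z_G^0]$), which is exactly the compatibility of descent data that you check on the trivialized torsors over $Y_S$ using the centrality of $\overline{h}=\iota(h)$ and the fact that $\iota$ is defined over $\Q_p$. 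The only difference is presentational: you argue geometrically on $Y_S$ with explicit descent data, while the paper phrases the same computation dually via the coordinate rings.
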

\begin{proof}
	Il suffit de démontrer $ \mathcal{T}_b \times_{Z_G^0} \mathcal{T}_h = \mathcal{T}_{b\overline{h}} $ pour $b \in B(G)$ et $h \in B(Z_G^0)$.
	
	D'après la formule (\ref{itm : réaliser torseur}) on a
	\[
	\mathcal{T}_b = \Proj \bigoplus_{d \geq 0} \Big( H^0 (Y_S, \mathcal{O}_{Y_S}) \otimes_{\breve{\Q}_p} \breve{\Q}_p[G] \Big)^{\varphi_S \otimes b\sigma = \pi^d}
	\]
	où l'action de $G$ sur $\mathcal{T}_b$ est donnée par action de $G$ sur lui même par translation à  droite et action de $Z_G^0$ est donnée par translation à gauche.
	
	On a le diagramme commutatif suivant où $\mu$ désigne la multiplication de $G$.
	\begin{center} \phantomsection \label{itm : diagramme}
		\begin{tikzpicture}[scale = 1]
		\draw (-1,0) node {$G_{\breve{\Q}_p} \times (Z_G^0)_{\breve{\Q}_p} $};
		\draw (3,0) node {$ G_{\breve{\Q}_p} $};
		\draw (-1,-1.5) node {$ G_{\breve{\Q}_p} \times (Z_G^0)_{\breve{\Q}_p}  $};
		\draw (3,-1.5) node {$G_{\breve{\Q}_p}$};
		\draw [->] (-1,-0.4) -- (-1,-1.15) node[midway, left]{$\lambda_b  \times \rho_{\overline{h}^{-1}}$};
		\draw [->] (3, -0.4) -- (3,-1.15)node[midway, right]{$\lambda_{b\overline{h}}$};
		\draw [->] (0,0) -- (2.5,0) node [midway, above]{$\mu$};
		\draw [->] (0, -1.5) -- (2.5, -1.5) node [midway, above]{$\mu$};
		\end{tikzpicture}
	\end{center}
	
	On en déduit que la co-multiplication $\mu : \breve{\Q}_p[G] \longrightarrow \breve{\Q}_p[G] \otimes_{\breve{\Q}_p} \breve{\Q}_p[Z_G^0]$ induit l'isomorphisme voulu $ \mathcal{T}_b \times_{Z_G^0} \mathcal{T}_h = \mathcal{T}_{b\overline{h}} $. 
\end{proof}
Pour $T$ un tore défini sur $\breve{\Q}_p$, d'après Kottwitz on a une bijection
\[
\kappa : B(T)_{\text{basic}} = B(T) \xrightarrow{\sim} X_*(T)_{\Gamma}.
\]

On note $b_{\lambda}$ l'élément correspondant à un $\lambda \in X_*(T)_{\Gamma}$ via cette bijection. On voit également que $ B(T, \lambda) = \{ b_{\lambda} \} $, en particulier pour $C$ un corps algébriquement clos, il existe une modification de type $\lambda$ 
\[
\alpha : \mathcal{E}_{b | X_{C^b} \setminus D_C } \xrightarrow{\sim} \mathcal{E}_{1 | X_{C^b} \setminus D_C }
\] 
si et seulement si $\mathcal{E}_{b} \simeq \mathcal{E}_{b_{\lambda}}$.

Étant données deux modifications $\alpha, \alpha'$ de type $ \mu = 0 \in X_*(T)$, d'après la proposition \ref{itm : construction fondamentale}, on peut construire une autre modification qui est aussi de type $ \mu = 0$
\[
\mathcal{E}_1 = \mathcal{E}_1 \times_{T} \mathcal{E}_1 \xrightarrow{\alpha \times \alpha'} \mathcal{E}_1 \times_{T} \mathcal{E}_1 = \mathcal{E}_1.
\]

En utilisant cette construction, on peut munir l'espace de Shtukas $\Sht(T, 0)$ d'une structure de diamant en groupes. En effet, pour $ S = \Spa(R, R^+)$ un espace affinoïde perfectoïde sur $\Spa(\overline{\F}_p)$ ainsi que $S^{\sharp} = \Spa(R^{\sharp}, (R^{\sharp})^+)$ un débasculement. Étant données deux modifications de type $ \mu = 0$
\[
\alpha, \beta : \mathcal{E}_{1 | X_{S} \setminus D_{S^{\sharp}} } \xrightarrow{\sim} \mathcal{E}_{1 | X_{S} \setminus D_{S^{\sharp}} },
\] 
on définit le produit de $\alpha$ et $\beta$ par la formule 
\[
\alpha \times \beta : \mathcal{E}_1 = \mathcal{E}_1 \times_{T} \mathcal{E}_{1 | X_{S} \setminus D_{S^{\sharp}} } \xrightarrow{\alpha \times \beta} \mathcal{E}_1 \times_{T} \mathcal{E}_{1 | X_{S} \setminus D_{S^{\sharp}} } = \mathcal{E}_1.
\]

Alors $\Sht(T, 0)$ avec ce produit est un diamant en groupes. De plus, d'après la proposition \ref{itm : modification de type 0}, on a une identification de diamants en groupes $\Sht(T, 0) \simeq \underline{T(\Q_p)}$.





\begin{lemme} \phantomsection \label{itm : associativité}
	Pour une modification $\alpha : \mathcal{E}_{| X_{S} \setminus D_{S^{\sharp}}} \longrightarrow \mathcal{E}_{1 | X_{S} \setminus D_{S^{\sharp}}}$ de $G$-torseurs et deux modifications $ \beta, \gamma : \mathcal{E}_{| X_{S} \setminus D_{S^{\sharp}}} \longrightarrow \mathcal{E}_{1 | X_{S} \setminus D_{S^{\sharp}}}$ de $Z_G^0$-torseurs on a $\big( \alpha \times \beta \big) \times \gamma = \alpha \times \big( \beta \times \gamma \big)$ comme modifications de $G$-torseurs. 
\end{lemme}
\begin{proof}
	Il s'agit d'utiliser le lemme \ref{itm : G-tor G-fib} pour calculer explicitement les isomorphismes des torseurs.
\end{proof}

Étant donné un groupe réductif connexe $G$ défini sur $\Q_p$ et $\iota : Z_G^0 \hookrightarrow G$ la composante connexe du tore central. Le morphisme $\iota$ induit un morphisme $ \theta : X_*(Z_G^0) \longrightarrow X_*(T)^+ $ où $T$ est le tore maximal de $G$. 
Afin d'alléger les notations, pour $\lambda \in X_*(Z_G^0)$, on note encore $\lambda$ son image par $\theta$ dans $X_*(T)^+$.

Soient $ S = \Spa(R, R^+)$ un espace affinoïde perfectoïde sur $\Spa(\overline{\F}_p)$ ainsi que $S^{\sharp} = \Spa (R^{\sharp}, (R^{\sharp})^+)$ un débasculement. Si l'on dispose d'une modification $ \alpha : \mathcal{E}_{h | X_{S} \setminus D_{S^{\sharp}}} \longrightarrow \mathcal{E}_{1 | X_{S} \setminus D_{S^{\sharp}}} $ de $Z_G^0$-fibrés de type $\lambda$, alors pour tout $G$-fibré $\mathcal{E}_b$ et tout isomorphisme $f : \mathcal{E}_b \xrightarrow{\sim} \mathcal{E}_b$ on a une modification de $G$-fibrés
\[
\overline{\alpha} : \mathcal{E}_{b\overline{h}} \simeq \mathcal{E}_b \times_{Z_G^0} \mathcal{E}_{h | X_{S} \setminus D_{S^{\sharp}}} \xrightarrow{f \times \alpha} \mathcal{E}_b \times_{Z_G^0} \mathcal{E}_{1 | X_{S} \setminus D_{S^{\sharp}}} \simeq \mathcal{E}_b.
\]

De même, pour une modification de $G$-fibrés $\beta : \mathcal{E}_b \longrightarrow \mathcal{E}_1$ de type $\mu$, on peut construire une modifications de $G$-fibrés 
\[
\overline{\beta} : \mathcal{E}_b = \mathcal{E}_b \times_{Z_G^0} \mathcal{E}_{1 | X_{S} \setminus D_{S^{\sharp}}} \xrightarrow{\beta \times \Id} \mathcal{E}_1 \times_{Z_G^0} \mathcal{E}_{1 | X_{S} \setminus D_{S^{\sharp}}} = \mathcal{E}_1.
\]
\begin{proposition} \phantomsection \label{itm : type} Supposons que $G$ est déployé sur $\breve{\Q}_p$.
	\begin{enumerate}
		\item[(i)] La modification $\overline{\alpha} : \mathcal{E}_{b\overline{h}} \longrightarrow \mathcal{E}_b $ est de type $\lambda$.
		\item[(ii)] La modification $\overline{\beta}$ est de type $\mu$.
	\end{enumerate}
\end{proposition}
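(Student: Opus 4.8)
The plan is to reduce to a geometric point and to a computation in the Cartan decomposition over $B^+_{dR}$. Since being a modification of type $\lambda$ (resp. $\mu$) is by definition checked on all geometric points $x_{C^b}\colon C^b\to \Spa(R,R^+)$, I would first replace $S$ by $\Spa(C^b,\mathcal{O}_{C^b})$ with $C$ a complete algebraically closed perfectoid field. Then $B^+_{dR}(C)$ is a discrete valuation ring, every $\mathcal{E}_b$ carries the canonical trivialization coming from its isocristal description, and each modification in play induces an automorphism of $\mathcal{E}_{1,B_{dR}}$, i.e. an element $g\in G(B_{dR}(C))$ whose class in $\bigsqcup_{\nu\in X_*(T)^+}G(B^+_{dR}(C))\,\nu(t)^{-1}\,G(B^+_{dR}(C))$ is exactly the type. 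The hypothesis that $G$ is split over $\breve{\Q}_p$ guarantees that $T$ is split over $B^+_{dR}(C)$, so this makes sense with cocharacters valued in $X_*(T)$.

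Next I would make the contracted-product construction explicit over $B^+_{dR}(C)$. The key point is that $\mathcal{E}_b\times_{Z_G^0}\mathcal{E}_h$ inherits a canonical $B^+_{dR}(C)$-trivialization from those of $\mathcal{E}_b$ and $\mathcal{E}_h$, and that the isomorphism $\mathcal{E}_{b\overline h}\simeq\mathcal{E}_b\times_{Z_G^0}\mathcal{E}_h$ of Proposition \ref{itm : construction fondamentale}, being induced by the co-multiplication, respects it. Granting this, for (i) the element $g\in G(B_{dR}(C))$ attached to $\overline\alpha=f\times\alpha$ equals, up to left and right multiplication by $G(B^+_{dR}(C))$ — coming from $f$, which extends over $X_{C^b}$ hence is of type $0$, and from the trivialization of $\mathcal{E}_h$ — the element $\iota(g_\alpha)$, where $g_\alpha\in Z_G^0(B_{dR}(C))$ describes $\alpha$; here I use that $Z_G^0$ is central in $G$, so that its left and right actions on $\mathcal{E}_b$ commute. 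Since $\alpha$ has type $\lambda$ and $Z_G^0$ is a torus, $g_\alpha\in Z_G^0(B^+_{dR}(C))\,\lambda(t)^{-1}$; as $\iota$ sends $Z_G^0(B^+_{dR}(C))$ into $G(B^+_{dR}(C))$ with $\iota\circ\lambda=\theta(\lambda)$, this gives $g\in G(B^+_{dR}(C))\,\theta(\lambda)(t)^{-1}\,G(B^+_{dR}(C))$. Finally, $\lambda$ being central, $\theta(\lambda)$ is Weyl-invariant, in particular dominant, so it is the type of $\overline\alpha$ — namely $\lambda$ in the paper's notation. For (ii) the argument is identical, with the identity of $\mathcal{E}_1$ (a type-$0$ modification, hence in $G(B^+_{dR}(C))$ after trivialization) playing the role of $f$ and $\alpha$: the element attached to $\overline\beta=\beta\times\Id$ agrees with that of $\beta$ up to $G(B^+_{dR}(C))$ on both sides, so $\overline\beta$ remains of type $\mu$.

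I expect the main obstacle to be the second step: checking carefully that the contracted product is compatible with the canonical $B^+_{dR}(C)$-trivializations and that the associated element of $G(B_{dR}(C))$ factors as claimed. This should reduce to an unwinding of the Beauville-Laszlo glueing applied to the commutative diagram of Proposition \ref{itm : construction fondamentale}, the essential inputs being the centrality of $Z_G^0$ (so that the $G$-component simply multiplies) and the fact that the double coset indexed by the dominant cocharacter $\theta(\lambda)$ is the correct one, so that the partial order $\leq$ on $X_*(T)^+$ plays no role here.
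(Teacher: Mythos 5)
Your outline is correct, but it is organized differently from the paper's proof. The paper begins with a Tannakian reduction: since being of type $\mu$ can be tested on representations, and $\Rep_{\Q_p}G$ is semisimple with $\rho(Z_G^0)\subset Z_{GL(V)}$ for $\rho$ irreducible, it reduces to $G=GL(V)$, where $Z_G^0=GL_1$ and $\lambda$ is an integer $d$; after passing to a geometric point $C$, it then unwinds the explicit Proj-description of the torsors $\mathcal{T}_b$ together with the co-multiplication diagram of proposition \ref{itm : construction fondamentale}, and finds that $\overline{\alpha}$ is literally multiplication by $t^{-d}$, hence of type $(d,\ldots,d)$, i.e. $\lambda$. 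You instead stay with $G$ itself and argue directly in the Cartan decomposition of $G(B_{dR}(C))$: the isomorphisms $\mathcal{E}_{b\overline{h}}\simeq\mathcal{E}_b\times_{Z_G^0}\mathcal{E}_h$, $\mathcal{E}_b\times_{Z_G^0}\mathcal{E}_1\simeq\mathcal{E}_b$ and the automorphism $f$ are isomorphisms of bundles over all of $X_{C^b}$, so after trivialization they only contribute elements of $G(B^+_{dR}(C))$, and the double coset of $\overline{\alpha}$ is that of $\iota(g_\alpha)$, indexed by the central, hence Weyl-invariant and dominant, cocharacter $\theta(\lambda)$; likewise for $\overline{\beta}$. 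This is a legitimate and arguably more direct route: it avoids the reduction to $GL(V)$ and works uniformly for all $G$, at the price of the compatibility you yourself flag, namely that the contracted product and the co-multiplication isomorphism interact with the canonical $B^+_{dR}$-trivializations as claimed -- and that verification, by unwinding Beauville--Laszlo on the diagram of proposition \ref{itm : construction fondamentale}, is exactly the explicit computation the paper performs in the $GL(V)$ case, so nothing essential is being skipped. Note moreover that your argument is robust to this point: any failure of the trivializations to match on the nose is an automorphism of the trivial torsor over $B^+_{dR}(C)$, i.e. an element of $G(B^+_{dR}(C))$, which does not change the double coset; with that observation your sketch closes into a complete proof.
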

\begin{proof}
	D'après le formalisme tanakien, il suffit de démontrer le résultat pour toutes les représentations $ \rho : G \longrightarrow GL(V)$ de $G$. Puisque l'on est dans le cas de caractéristique $0$ et que $G$ est réductif, la catégorie $\Rep_{\Q_p}G$ est semi-simple. Il suffit donc de traiter les représentations irréductibles. Lorsque $ \rho : G \longrightarrow GL(V) $ est irréductible, on a $\rho (Z_G^0) \subset Z_{GL(V)}$. On obtient également un cocaractère de $Z_{GL(V)}$
	\[
	\rho \circ \lambda : \G_{m, \overline{\Q}_p} \longrightarrow Z_{GL(V), \overline{\Q}_p}.
	\]
	
	On peut supposer que $G = GL(V)$. On a alors $Z_G^0 = GL_1$ et $ X_*(Z_G^0) \simeq \Z $, de plus le cocaractère $ \lambda$ correspond à un $d \in \Z$. On en déduit que $h = [p^d] \in B(Z_G^0)$ puisque la modification $\alpha : \mathcal{E}_{h | X_{S} \setminus D_{S^{\sharp}}} \longrightarrow \mathcal{E}_{1 | X_{S} \setminus D_{S^{\sharp}}}$ est de type $\lambda$.
	
	Pour calculer le type de $\overline{\alpha}$, il suffit de considérer la cas $R = C^b $ où $C$ est un corps complet algébriquement clos. D'après le recollement de Beauville-Laszlo, on a la description suivante : 
	\[
	\mathcal{E}_{p^d} = \Big( (B_{C^b}[t^{-1}] \otimes_{\breve{\Q}_p} \breve{\Q}_p)^{\varphi \otimes p^d\sigma} , B^{+}_{dR}(C) \Big) \quad \quad \quad \mathcal{E}_{1} = \Big( (B_{C^b}[t^{-1}] \otimes_{\breve{\Q}_p} \breve{\Q}_p)^{\varphi \otimes \Id\sigma} , B^{+}_{dR}(C) \Big).
	\]
	
	La modification $\alpha$ de type $\lambda$ est alors donnée par l'isomorphisme 
	\begin{align*}
	(B_{C^b}[t^{-1}] \otimes_{\breve{\Q}_p} \breve{\Q}_p)^{\varphi \otimes p^d\sigma} &\longrightarrow (B_{C^b}[t^{-1}] \otimes_{\breve{\Q}_p} \breve{\Q}_p)^{\varphi \otimes \Id\sigma} \\
	x &\longmapsto t^{-d}_{C^b}x. 	
	\end{align*}
	
	D'après la formule (\ref{itm : réaliser torseur}), la modification $\alpha$ s'écrit en termes  de $Z_G^0$-torseurs
	\begin{align*}  
	\mathcal{T}_{p^d  | X_{C^b} \setminus D_{C}}  \simeq \Big(B_{C^b}[t^{-1}]  \otimes_{\breve{\Q}_p} \breve{\Q}_p[Z_G^0] \Big)^{\varphi \otimes p^d \sigma} &\xrightarrow{\sim} \Big(B_{C^b}[t^{-1}]  \otimes_{\breve{\Q}_p} \breve{\Q}_p[Z_G^0] \Big)^{\varphi \otimes \Id \sigma} \simeq \mathcal{T}_{1  | X_{C^b} \setminus D_{C}} \\
	x \otimes x' &\longmapsto t^{-d}x \otimes x'.	
	\end{align*}
	
	En utilisant le diagramme \ref{itm : diagramme}, la modification $\overline{\alpha}$ s'écrit en termes de $G$-torseurs
	\begin{align*}  
	\mathcal{T}_{p^db  | X_{C^b} \setminus D_{C}}  \simeq \Big(B_{C^b}[t^{-1}]  \otimes_{\breve{\Q}_p} \breve{\Q}_p[G] \Big)^{\varphi \otimes p^db \sigma} &\xrightarrow{\sim} \Big(B_{C^b}[t^{-1}]  \otimes_{\breve{\Q}_p} \breve{\Q}_p[G] \Big)^{\varphi \otimes \Id \sigma} \simeq \mathcal{T}_{1  | X_{C^b} \setminus D_{C}} \\
	x \otimes x' &\longmapsto t^{-d}_{C^b}x \otimes x'.	
	\end{align*}
	
	Finalement la modification $\overline{\alpha}$ s'écrit sous la forme
	\begin{align*}
	(B_{C^b}[t^{-1}] \otimes_{\Q_p} V)^{\varphi \otimes p^db\sigma} &\longrightarrow (B_{C^b}[t^{-1}] \otimes_{\Q_p} V)^{\varphi \otimes \Id\sigma} \\
	x &\longmapsto t^{-d}x. 	
	\end{align*}
	
	Il est alors aisé de voir que la modification $\overline{\alpha}$ est de type $(d,\cdots, d) \in \Z^{\dim_{\Q_p}V} = X_*(GL(V))$, autrement dit $\overline{\alpha}$ est de type $\lambda$, ce qui démontre le point $(i)$. 
	
	On démontre le point (ii) par le même argument.
\end{proof}

 On peut ainsi utiliser la proposition \ref{itm : type} pour définir une action de $\Sht(Z_G^0, 0) \simeq \underline{Z_G^0(\Q_p)}$ sur $\Sht(G, \mu, b)$ et sur $\Sht(Z_G^0, \lambda)$. Soient $ S = \Spa(R, R^+)$ un espace affinoïde perfectoïde sur $\Spa(\overline{\F}_p)$ ainsi que $ S^{\sharp} = \Spa(R^{\sharp}, (R^{\sharp})^+)$ un débasculement. Étant donnée une modification de type $ \mu = 0$ de $Z_G^0$-fibrés $ \alpha : \mathcal{E}_{1 | X_{S} \setminus D_{S^{\sharp}} } \xrightarrow{\sim} \mathcal{E}_{1 | X_{S} \setminus D_{S^{\sharp}} }$ et une modification de type $\mu$ de $G$-fibrés $ \alpha' : \mathcal{E}_{b | X_{S} \setminus D_{S^{\sharp}} } \xrightarrow{\sim} \mathcal{E}_{1 | X_{S} \setminus D_{S^{\sharp}} } $, on définit l'action de $\alpha$ sur $\alpha'$ par la formule
\[
\mathcal{E}_b = \mathcal{E}_b \times_{Z_G^0} \mathcal{E}_{1 | X_{S} \setminus D_{S^{\sharp}} } \xrightarrow{\alpha' \times \alpha} \mathcal{E}_1 \times_{Z_G^0} \mathcal{E}_{1 | X_{S} \setminus D_{S^{\sharp}} } = \mathcal{E}_1.
\] 

D'après la proposition \ref{itm : type}, $\alpha' \times \alpha$ est une modification de type $\mu$. Le lemme \ref{itm : associativité} implique que cela définit une action de $\underline{Z_G^0(\Q_p)}$ sur $\Sht(G, \mu, b)$.

De même, si l'on a une modification de type $\lambda$ de $Z_G^0$-fibrés $ \alpha' : \mathcal{E}_{b_{\lambda} | X_{S} \setminus D_{S^{\sharp}} } \xrightarrow{\sim} \mathcal{E}_{1 | X_{S} \setminus D_{S^{\sharp}} } $, on peut définir une autre modification de type $\lambda$ par la formule
\[
\mathcal{E}_{b_{\lambda}} = \mathcal{E}_1 \times_{Z_G^0} \mathcal{E}_{b_{\lambda} | X_{S} \setminus D_{S^{\sharp}}} \xrightarrow{ \alpha^{-1} \times \alpha' } \mathcal{E}_{1} \times_{Z_G^0} \mathcal{E}_{1 | X_{S} \setminus D_{S^{\sharp}}} = \mathcal{E}_{1} 
\]
où les actions de $Z_G^0$ sur $ \mathcal{E}_1 \times_{Z_G^0} \mathcal{E}_{b_{\lambda}}$ et $\mathcal{E}_{1} \times_{Z_G^0} \mathcal{E}_1$ sont induites par celle sur la première composante.

\begin{remarque} \phantomsection \label{itm : action de Z_G}
	Étant donné un élément $ g \in Z_G^0(\Q_p) $ correspondant à une modification de type $ \mu = 0$ de $Z_G^0$-fibrés $ \alpha : \mathcal{E}_{1 | X_{S} \setminus D_{S^{\sharp}} } \xrightarrow{\sim} \mathcal{E}_{1 | X_{S} \setminus D_{S^{\sharp}} }$ alors action de $\alpha$ est celle de $\iota(g)$ où $\iota : Z_G^0(\Q_p) \hookrightarrow G(\Q_p)$ est l'injection canonique. En effet, on a $ \alpha' \times \alpha = \Big( \alpha' \circ \Id_{\mathcal{E}_{1}} \Big) \times \Big( \Id_{ \overline{\mathcal{E}_1}} \circ \alpha \Big) = \Big( \alpha' \times \Id_{ \overline{\mathcal{E}_1}} \Big) \circ \Big( \Id_{\mathcal{E}_{1}} \times \alpha \Big) = \alpha' \circ \iota(g) $.
\end{remarque}

\begin{lemme} \phantomsection \label{itm : transitivement}
	Soient $T$ un tore défini sur $\Q_p$ et $\alpha, \beta : \mathcal{E}_{\lambda} \longrightarrow \mathcal{E}_1$ deux modifications de $T$-fibrés de type $\lambda$. Alors il existe une modification $ \gamma : \mathcal{E}_1 \xrightarrow{\sim} \mathcal{E}_1 $ (de type $0$) de sorte que $\beta = \alpha \times \gamma$. 
\end{lemme}
\begin{proof}
	Soit $ \alpha' : \mathcal{E}_{\lambda^{-1}} \longrightarrow \mathcal{E}_1 $ une modification de $T$-fibrés de type $\lambda^{-1}$ (une telle modification existe). Considérons la modification suivante 
	\[
	\alpha \times \alpha' : \mathcal{E}_{\lambda} \times_{T} \mathcal{E}_{\lambda^{-1} | X_{S} \setminus D_{S^{\sharp}}} \xrightarrow{\alpha \times \alpha'} \mathcal{E}_{1} \times_{T} \mathcal{E}_{1 | X_{S} \setminus D_{S^{\sharp}}}
	\]
	
	Puisque $\mathcal{E}_{\lambda} \times_{T} \mathcal{E}_{\lambda^{-1}} = \mathcal{E}_1$ (d'après la proposition \ref{itm : construction fondamentale}), la modification $\alpha \times \alpha'$ est en fait une modification de type $0$ de $\mathcal{E}_1$. 
Posons $ \alpha' \times \big( \alpha \times \alpha' \big)^{-1} \times \beta $ le produit contracté
	\[
	\mathcal{E}_{\lambda^{-1}} \times_{T} \mathcal{E}_1 \times_{T} \mathcal{E}_{\lambda} \xrightarrow{\alpha' \times \big( \alpha \times \alpha' \big)^{-1} \times \beta} \mathcal{E}_1 \times_{T} \mathcal{E}_1 \times_{T} \mathcal{E}_1.
	\]
	
	En particulier d'après la proposition \ref{itm : construction fondamentale}, $\mathcal{E}_{\lambda^{-1}} \times_{T} \mathcal{E}_1 \times_{T} \mathcal{E}_{\lambda} = \mathcal{E}_1$ et on voit que $\alpha' \times \big( \alpha \times \alpha' \big)^{-1} \times \beta$ est une modification de type $0$ de $\mathcal{E}_1$. Finalement, d'après le lemme \ref{itm : associativité}, on a 
	\begin{equation*}
	\alpha \times \Big( \alpha' \times \big( \alpha \times \alpha' \big)^{-1} \times \beta \Big) = \Big( \Id \times \beta \Big) = \beta
	\end{equation*}
	ce qui termine la démonstration.
\end{proof}

\section{Modifications centrales des espaces de Shtukas}
Dans cette section, on suppose que $G$ est déployé sur $\Q_p$. En utilisant la proposition \ref{itm : type}, on prouve un énoncé géométrique reliant deux espaces de modules de Shtukas. On donnera également une relation cohomologique de ces espaces.

Puisque le produit fibré de diamants existe (\cite{S17}, Prop. 11.4), alors $\Sht(G, \mu, b) \times_{\Spa(\breve{\Q}_p)^{\diamond}} \Sht(Z_G^0, \lambda)$ est un diamant. C'est le faisceau sur $\Perf_{\breve{\Q}_p, \text{pro-ét}}$ qui associe à chaque $\breve{\Q}_p$-espace perfectoïde $S^{\sharp}$, avec $(S^{\sharp})^b = S$, l'ensemble des couples $(\alpha, \beta)$ où $\alpha : \mathcal{E}_{b | X_{S} \setminus D_{S^{\sharp}}} \xrightarrow{\sim} \mathcal{E}_{1 | X_{S} \setminus D_{S^{\sharp}}}$ est une modification de $G$-fibrés de type $\mu'$ plus petit que $\mu$ et $ \beta : \mathcal{E}_{b_{\lambda} | X_{S} \setminus D_{S^{\sharp}}} \xrightarrow{\sim} \mathcal{E}_{1 | X_{S} \setminus D_{S^{\sharp}}} $ est une modification de $G$-fibrés de type $\lambda$. Comme le diamant en groupes $\Sht(Z_G^0, 0) \simeq \underline{Z_G^0(\Q_p)}$ agit sur $\Sht(G, \mu, b)$ et $\Sht(Z_G^0, \lambda)$, on peut définir le faisceau quotient.
\begin{definition}
	On définit $\Sht(G, \mu, b) \times_{\underline{Z_G^0(\Q_p)}} \Sht(Z_G^0, \lambda)$ comme le faisceau quotient. Plus précisément, c'est la faisceautisation du pré-faisceau sur $\Perf_{\overline{\F}_p}$ qui associe à chaque $\breve{\Q}_p$-espace perfectoïde $S^{\sharp}$, avec $(S^{\sharp})^b = S$, l'ensemble $\Sht(G, \mu, b) \times_{\Spa(\breve{\Q}_p)^{\diamond}} \Sht(Z_G^0, \lambda)\big(S^{\sharp}, S \big) / \underline{Z_G^0(\Q_p)} \big( S^{\sharp}, S \big) $.
\end{definition}

Le groupe $G(\Q_p) \times \J_b(\Q_p)$ agit sur le diamant $\Sht(G, \mu, b)$, on peut utiliser cette action pour définir une action de ce groupe sur le faisceau $\Sht(G, \mu, b) \times_{\underline{Z_G^0(\Q_p)}} \Sht(Z_G^0, \lambda)$. Un élément $g \in G(\Q_p)$ correspond à un isomorphisme $\mathcal{E}_1 \xrightarrow{\sim} \mathcal{E}_1$ et $g$ agit sur $\Sht(G, \mu, b)$ par composition avec cet isomorphisme. On définit alors l'action de $g$ sur $\Sht(G, \mu, b) \times_{\underline{Z_G^0(\Q_p)}} \Sht(Z_G^0, \lambda)$ par la formule $ g \Big( (\alpha, \beta) \Big) = \Big( g \circ \alpha, \beta \Big) $. Cette action est bien définie car $ (g \circ \alpha) \times \gamma = (g \times \Id_{ \mathcal{E}_1}) \circ (\alpha \times \gamma) = g \circ (\alpha \times \gamma) $ pour tout $\gamma \in \underline{Z_G^0(\Q_p)}$.

De la même manière, on définit une action de $\J_b(\Q_p)$\footnote{ $\overline{b}_{\lambda}$ étant un élément central de $G(\breve{\Q}_p)$, on a alors $ \J_b(\Q_p) = \J_{b \overline{b}_{\lambda}} (\Q_p) $.} sur $\Sht(G, \mu, b) \times_{\underline{Z_G^0(\Q_p)}} \Sht(Z_G^0, \lambda)$. 
Il y a des isomorphismes canoniques
\[
\sigma_b : \mathcal{E}_{b^{\sigma}} \xrightarrow{\sim} \mathcal{E}_{b} \quad \quad \quad \sigma_{b_{\lambda}} : \mathcal{E}_{b_{\lambda}^{\sigma}} \xrightarrow{\sim} \mathcal{E}_{b_{\lambda}}.
\]
On définit alors la donnée de descente de $\Sht(G, \mu, b) \times_{\underline{Z_G^0(\Q_p)}} \Sht(Z_G^0, \lambda)$ comme l'isomorphisme
\begin{align*}
\mathop{\mathrm{Fr}}\nolimits : \Sht(G, b^{\sigma}, \mu) \times_{\Sht(Z_G^0, 0)} \Sht(Z_G^0, \lambda) & \xrightarrow{\sim} \Sht(G, b^{\sigma}, \mu) \times_{\Sht(Z_G^0, 0)} \Sht(Z_G^0, \lambda) \\
(\alpha, \beta) & \longmapsto (\alpha \circ \sigma_b, \beta \circ \sigma_{b_{\lambda}}).
\end{align*}
On vérifie aisément que cela est bien défini.
\begin{lemme} \phantomsection \label{itm :  données de descentes}
	Notons $\sigma_{b\overline{b}_{\lambda}} : \mathcal{E}_{(b\overline{b}_{\lambda})^{\sigma}} \xrightarrow{\sim} \mathcal{E}_{b\overline{b}_{\lambda}}$ l'isomorphisme canonique. On a alors
	\[
	\sigma_b \times \sigma_{\overline{b}_{\lambda}} = \sigma_{b\overline{b}_{\lambda}}.
	\]
\end{lemme}
\begin{proof}
	D'après la formule (\ref{itm : réaliser torseur}) on a
	\[
	\mathcal{T}_b = \Proj \bigoplus_{d \geq 0} \Big( H^0 (Y_S, \mathcal{O}_{Y_S}) \otimes_{\breve{\Q}_p} \breve{\Q}_p[G] \Big)^{\varphi_S \otimes b\sigma = \pi^d},
	\]
	et de même pour le torseur $\mathcal{T}_{b^{\sigma}}$. On en déduit que l'isomorphisme canonique $\sigma_b : \mathcal{T}_{b^{\sigma}} \simeq \mathcal{T}_b$ est donné par $ \Id_S \otimes b $.
	
	D'après la proposition \ref{itm : construction fondamentale}, on sait que la co-multiplication $\mu : \breve{\Q}_p[G] \longrightarrow \breve{\Q}_p[G] \otimes_{\breve{\Q}_p} \breve{\Q}_p[Z_G^0]$ induit l'isomorphisme $ \mathcal{T}_b \times_{Z_G^0} \mathcal{T}_{b_{\lambda}} = \mathcal{T}_{b\overline{b}_{\lambda}} $. On en déduit que $\sigma_b \times \sigma_{\overline{b}_{\lambda}} = \sigma_{b\overline{b}_{\lambda}}$.
\end{proof}
\begin{théorème} \phantomsection \label{itm : géométrique}
	Il y a un isomorphisme $G(\Q_p) \times \J_b(\Q_p)$-équivariant de faisceaux pro-étale qui commute avec les données de descentes.
	\[
	\Sht(G, \mu, b) \times_{\underline{Z_G^0(\Q_p)}} \Sht(Z_G^0, \lambda) \longrightarrow \Sht(G, b\overline{b}_{\lambda} ,\mu \cdot  \lambda).
	\]
\end{théorème}
\begin{proof}
	Pour une $\breve{\Q}_p$-algèbre perfectoïde $R^{\sharp}$ avec $(R^{\sharp})^b = R$ et un couple $(\alpha, \beta) \in \Sht(G, \mu, b) \times_{\Spa(\breve{\Q}_p)^{\diamond}} \Sht(Z_G^0, \lambda)\big(R^{\sharp}, R \big) $, on peut construire, d'après la proposition \ref{itm : construction fondamentale}, une modification 
	\[
	\alpha \times \beta : \mathcal{E}_{b\overline{b}_{\lambda} | X_{S} \setminus D_{S^{\sharp}}} \longrightarrow \mathcal{E}_{1 | X_{S} \setminus D_{S^{\sharp}}}
	\]
	D'autre part, en écrivant $ \alpha = \Id_{\mathcal{E}_b} \circ \alpha $ et $ \beta = \beta \circ \Id_{\mathcal{E}_1}$, on voit que 
	\[
	\alpha \times \beta = \Big( \alpha \times \Id_{\mathcal{E}_1} \Big) \circ \Big( \Id_{\mathcal{E}_b} \times \beta \Big)
	\]
	où $ \Id_{\mathcal{E}_b} \times \beta $ et $ \alpha \times \Id_{\mathcal{E}_1} $ sont des modifications de $G$-fibrés
	\[
	\Id_{\mathcal{E}_b} \times \beta : \mathcal{E}_{b\overline{b}_{\lambda}} \longrightarrow \mathcal{E}_b \quad \quad \quad \alpha \times \Id_{\mathcal{E}_1} : \mathcal{E}_b \longrightarrow \mathcal{E}_1
	\]
	qui sont respectivement de type $\lambda$ et $\mu$ d'après la proposition \ref{itm : type}. 
	
	Afin de calculer le type de $\alpha \times \beta$, on utilise la décomposition de Cartan
	\[
	G(B_{dR}(C)) = \bigsqcup_{\eta \in X_{*}(T)/W} G(B^{+}_{dR}(C)) \eta(\xi)^{-1} G(B^{+}_{dR}(C)) 
	\]
	où $C$ est un corps perfectoïde algébriquement clos et $T$ un tore maximal de $G$.
	
	Pour $g \in G(B^{+}_{dR}(C)) \mu(\xi)^{-1} G(B^{+}_{dR}(C))$ et $ h \in G(B^{+}_{dR}(C)) \lambda(\xi)^{-1} G(B^{+}_{dR}(C)) $, on voit que $ gh \in G(B^{+}_{dR}(C)) \mu \cdot \lambda(\xi)^{-1} G(B^{+}_{dR}(C)) $ car $\lambda(\xi)^{-1}$ est central. Autrement dit la modification $\alpha \times \beta$ est de type $ \mu \cdot \lambda $.
	
	On a construit un morphisme $ \Phi : \Sht(G, \mu, b) \times_{\Spa(\breve{\Q}_p)^{\diamond}} \Sht(Z_G^0, \lambda) \longrightarrow \Sht(G, b\overline{b}_{\lambda} ,\mu \cdot \lambda) $. Montrons ensuite que ce morphisme passe au quotient par $ \Sht(Z_G^0, 0)$.
	
	Soient $(\alpha, \beta) \in \Sht(G, \mu, b) \times_{\Spa(\breve{\Q}_p)^{\diamond}} \Sht(Z_G^0, \lambda)$ et $\gamma \in \underline{Z_G^0(\Q_p)}$, il faut montrer que $ \Big( \alpha \times \gamma \Big) \times \Big( \gamma^{-1} \times \beta \Big) = \alpha \times \beta$. Or d'après le lemme \ref{itm : associativité}, cette identité est vérifiée. \\
	
	On va montrer que $\Phi$ est en fait un isomorphisme. Il s'agit de montrer que $ \Phi_{R^{\sharp}, R} $ est un isomorphisme pour tous les couples $(R^{\sharp}, R)$. On aura besoin de l'ensemble auxiliaire suivant
	\[
	S = \Big\{ (\alpha, \beta, \gamma) \in \Sht(G, \mu \cdot \lambda, b\overline{b}_{\lambda}) \times_{\Spa(\breve{\Q}_p)^{\diamond}} \Sht(Z_G^0, \lambda^{-1}) \times_{\Spa(\breve{\Q}_p)^{\diamond}} \Sht(Z_G^0, \lambda) (R^{\sharp}, R) \Big\} / \sim
	\]
	où $(\alpha, \beta, \gamma) \sim (\alpha', \beta', \gamma')$ si, et seulement si, il existe $\theta \in \Sht(Z_G^0, 0)(R^{\sharp}, R)$ de sorte que $\alpha' = \alpha \times \theta, \ \beta' = \theta^{-1} \times \beta$ ou $ \beta' = \beta \times \theta, \ \gamma' = \theta^{-1} \times \gamma $.
	
	On a donc une application canonique
	\begin{align*}
	f : \quad S \quad & \longrightarrow \Sht(G, \mu \cdot \lambda, b\overline{b}_{\lambda})(R^{\sharp}, R) \\
	(\alpha, \beta, \gamma) & \longmapsto \alpha \times \beta \times \gamma.
	\end{align*}
	
	On montre que cela est une bijection. Étant donné une modification $\alpha \in \Sht(G, \mu \cdot \lambda, b\overline{b}_{\lambda})(R^{\sharp}, R)$, on choisit $\beta$ et $\gamma$ des modifications de $Z_G^0$-fibrés de type $\lambda^{-1}$ et $\lambda$ respectivement. Puisque $\beta \times \gamma \in \underline{Z_G^0(\Q_p)}(R^{\sharp}, R) $ on peut choisir $\delta \in \underline{Z_G^0(\Q_p)}(R^{\sharp}, R)$ de sorte que $ \delta \times \beta \times \alpha = \Id_{ \mathcal{E}_{1 | X_{S} \setminus D_{S^{\sharp}} } } $. On a alors $ f(\alpha, \delta \times \beta, \gamma) = \alpha $. Autrement dit $f$ est une surjection. Le même type d'argument couplé avec le lemme \ref{itm : transitivement} montre également que $f$ est injective, finalement on en déduit que $f$ est une bijection.
	
	Maintenant on définit l'application $\Phi^{-1}_{R^{\sharp}, R}$ par la formule
	\begin{align*}
	\Phi^{-1}_{R^{\sharp}, R} : \quad S \quad & \longrightarrow \Sht(G, \mu, b) \times_{\Spa(\breve{\Q}_p)^{\diamond}} \Sht(Z_G^0, \lambda)(R^{\sharp}, R) \\
	(\alpha, \beta, \gamma) &\longmapsto (\alpha \times \beta, \gamma).
	\end{align*}
	Il est facile de vérifier que $\Phi^{-1}_{R^{\sharp}, R}$ est l'application inverse de $\Phi_{R^{\sharp}, R}$. Reste finalement à voir que $\Phi_{R^{\sharp}, R}$ commute avec toutes les structures. La commutativité avec l'action de $G(\Q_p) \times \J_b(\Q_p)$ est claire et la commutativité avec les données de descentes résulte du lemme \ref{itm :  données de descentes}.
\end{proof}

 On aimerait descendre en niveau fini l'énoncé géométrique de la proposition \ref{itm : géométrique}. Considérons un sous groupe compact $K \subset G(\Q_p)$ et posons $K_Z := K \cap Z_G^0(\Q_p)$ lequel est un sous groupe compact de $Z_G^0(\Q_p)$. Puisque $Z_G^0(\Q_p)$ est commutatif, $ Z_G^0(\Q_p) / K_Z $ est encore un groupe (discret).

\begin{corollaire} \phantomsection \label{itm : niveau fini}
	Il y a un isomorphisme $\J_b(\Q_p)$-équivariant de faisceaux pro-étale qui commute avec les données de descentes.
	\[
	\big( \Sht(G, \mu, b)/K \big) \times_{\underline{Z_G^0(\Q_p)/ K_Z}} \big( \Sht(Z_G^0, \lambda)/ K_Z \big) \longrightarrow  \Sht(G,\mu \cdot \lambda, b\overline{b}_{\lambda} )/K.
	\]
	
	De plus, l'action de $Z^0_G(\Q_p) / K_Z$ sur $\big( \Sht(G, \mu, b)/K \big) \times \big( \Sht(Z_G^0, \lambda)/ K_Z \big)$ est sans point fixe.
\end{corollaire}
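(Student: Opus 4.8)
The corollary is a finite-level descent of Theorem~\ref{itm : g�om�trique}, so the plan is to obtain it by quotienting the isomorphism $\Phi$ constructed there. First I would recall that $\Sht(G,\mu,b) = \varprojlim_K \Sht(G,\mu,b)/K$ with $K$ acting freely (this is the content of the level structures in \cite{SW17}), and similarly $\Sht(Z_G^0,\lambda) = \varprojlim_{K_Z}\Sht(Z_G^0,\lambda)/K_Z$. Since $K_Z = K \cap Z_G^0(\Q_p)$ is open in $Z_G^0(\Q_p)$, the quotient group $Z_G^0(\Q_p)/K_Z$ is discrete, and it acts on both $\Sht(G,\mu,b)/K$ (through the residual action induced by $\underline{Z_G^0(\Q_p)}$ acting on $\Sht(G,\mu,b)$, which descends because $K_Z \subset K$ so $K_Z$ acts trivially on $\Sht(G,\mu,b)/K$) and on $\Sht(Z_G^0,\lambda)/K_Z$ (since $Z_G^0$ is abelian and $\Sht(Z_G^0,0) \simeq \underline{Z_G^0(\Q_p)}$).

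The key step is to check that $\Phi$ is equivariant for the diagonal $\underline{K}$-action on the source, where $K$ acts on $\Sht(G,\mu,b)$ by post-composition and on $\Sht(Z_G^0,\lambda)$ via its image $K_Z$ in $\underline{Z_G^0(\Q_p)}$ acting through Remark~\ref{itm : action de Z_G}. Concretely, for $g \in K$, $\Phi(g\circ\alpha, \beta) = (g\circ\alpha)\times\beta = (g\times\Id_{\mathcal{E}_1})\circ(\alpha\times\beta) = g\circ(\alpha\times\beta) = g\cdot\Phi(\alpha,\beta)$, using the multiplicativity of the contracted product exactly as in the proof of Theorem~\ref{itm : g�om�trique}; the same computation at the residual level shows $\Phi$ descends. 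Since $\Phi$ is already known to be $G(\Q_p)\times\J_b(\Q_p)$-equivariant and compatible with descent data and with the $\underline{Z_G^0(\Q_p)}$-quotient, passing to the quotient by $\underline{K}$ and using $\J_b(\Q_p) = \J_{b\overline{b}_\lambda}(\Q_p)$ (the footnote) yields the desired isomorphism
\[
\big( \Sht(G, \mu, b)/K \big) \times_{\underline{Z_G^0(\Q_p)/ K_Z}} \big( \Sht(Z_G^0, \lambda)/ K_Z \big) \longrightarrow  \Sht(G,\mu \cdot \lambda, b\overline{b}_{\lambda} )/K,
\]
still $\J_b(\Q_p)$-equivariant and compatible with descent.

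For the freeness of the $Z_G^0(\Q_p)/K_Z$-action on $\big(\Sht(G,\mu,b)/K\big)\times\big(\Sht(Z_G^0,\lambda)/K_Z\big)$, I would argue pointwise: if $\bar{g}\in Z_G^0(\Q_p)/K_Z$ fixes a point $(\bar\alpha,\bar\beta)$, then in particular it fixes $\bar\beta\in\Sht(Z_G^0,\lambda)/K_Z$; but the action of $Z_G^0(\Q_p)/K_Z$ on $\Sht(Z_G^0,\lambda)/K_Z$ is a torsor (at each geometric point, $B(Z_G^0,\lambda)$ is a singleton and the modifications of type $\lambda$ form a torsor under $\Sht(Z_G^0,0) \simeq \underline{Z_G^0(\Q_p)}$ by Lemma~\ref{itm : transitivement}, and quotienting by $K_Z$ makes it a $Z_G^0(\Q_p)/K_Z$-torsor), hence $\bar g = 1$.

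The main obstacle I expect is bookkeeping rather than conceptual: one must be careful that the residual action of $Z_G^0(\Q_p)/K_Z$ on $\Sht(G,\mu,b)/K$ is genuinely well-defined, i.e. that the $\underline{Z_G^0(\Q_p)}$-action on $\Sht(G,\mu,b)$ descends modulo $K$ — this uses $K_Z = K\cap Z_G^0(\Q_p)$ so that $K_Z$-translation is absorbed by $K$-post-composition via Remark~\ref{itm : action de Z_G}, since $\iota(K_Z)\subset K$ — and that the fiber product over $\underline{Z_G^0(\Q_p)/K_Z}$ in the source of the corollary is literally the quotient of the fiber product over $\underline{Z_G^0(\Q_p)}$ appearing in Theorem~\ref{itm : g�om�trique} by the residual diagonal $\underline{K/K_Z}$-action; checking these identifications commute with the pro-\'etale sheafification is the only slightly delicate point, but it follows formally since all the groups involved are locally profinite and the quotients are taken in the category of pro-\'etale sheaves on $\Perf_{\overline{\F}_p}$.
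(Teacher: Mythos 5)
Your proposal is correct and follows essentially the same route as the paper: quotient the isomorphism of Theorem \ref{itm : g�om�trique} by $K$, identify $\big(\Sht(G,\mu,b)\times_{\underline{Z_G^0(\Q_p)}}\Sht(Z_G^0,\lambda)\big)/K$ with the fibre product of the finite-level quotients using Remark \ref{itm : action de Z_G} together with the centrality of $Z_G^0(\Q_p)$ in $G(\Q_p)$ (the paper does this by comparing the two equivalence relations pointwise, via $g\cdot k=k\cdot g$), and deduce freeness from the fact that $\Sht(Z_G^0,\lambda)/K_Z$ is a torsor under $Z_G^0(\Q_p)/K_Z$. One wording slip: there is no homomorphism $K\to K_Z$, so your ``diagonal $\underline{K}$-action'' with $K$ acting on $\Sht(Z_G^0,\lambda)$ via its ``image'' $K_Z$ is not meaningful --- but it is also unnecessary, since the $G(\Q_p)$-equivariance of $\Phi$ already gives the descent modulo $K$ acting on the first factor alone, exactly as in the paper.
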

\begin{proof}
	D'après la remarque \ref{itm : action de Z_G}, l'action de $g \in \Z^0_G(\Q_p)$ sur $\Sht(G, \mu, b)$ se fait via action de $\iota(g)$ où $\iota : Z_G^0(\Q_p) \hookrightarrow G(\Q_p)$ est l'injection canonique. De plus $Z_G^0(\Q_p)$ est commutatif, on en déduit que l'action (induite) de $ Z_G^0(\Q_p) / K_Z $ sur $\Sht(G, \mu, b)/K$ et sur $\Sht(Z^0_G, \lambda)/ K_Z$ est bien définie. 
	
D'après la proposition \ref{itm : géométrique}, il y a un isomorphisme $\J(\Q_p)$-équivariant qui commute avec les données de descentes
\[
\Big( \Sht(G, \mu, b) \times_{\underline{Z_G^0(\Q_p)}} \Sht(Z_G^0, \lambda) \Big) / K \longrightarrow \Sht(G, b\overline{b}_{\lambda} ,\mu \cdot \lambda) / K.
\]	 

Il reste à comparer l'espace $\Big( \Sht(G, \mu, b)/K \Big) \times_{\underline{Z_G^0(\Q_p)/ K_Z}} \Big( \Sht(Z_G^0, \lambda)/ K_Z \Big)$ avec le quotient $\Big( \Sht(G, \mu, b) \times_{\underline{Z_G^0(\Q_p)}} \Sht(Z_G^0, \lambda) \Big) / K$. Il suffit de comparer au niveau des points. 

Montrons tout d'abord que le morphisme canonique 
\begin{equation} \phantomsection \label{itm : ensembliste}
	\Big( \Sht(G, \mu, b) \times_{\underline{Z_G^0(\Q_p)}} \Sht(Z_G^0, \lambda) \Big) / K \longrightarrow  \Big( \Sht(G, \mu, b)/K \Big) \times_{\underline{Z_G^0(\Q_p)}} \Sht(Z_G^0, \lambda)
\end{equation}
est un isomorphisme (ensembliste). En effet, $(x, y)$ et $(z,t)$ dans $ \Sht(G, \mu, b) \times \Sht(Z_G^0, \lambda) $ sont dans la même classe dans l'ensemble à gauche si et seulement s'il existe $g \in Z_G^0(\Q_p)$ et $k \in K$ de sorte que $ z = x \cdot g \cdot k $ et $ t = g^{-1} \cdot y $.

De même, $(x, y)$ et $(z,t)$ sont dans la même classe dans l'ensemble à droite si et seulement s'il existe $g \in Z_G^0(\Q_p)$ et $k \in K$ de sorte que $ z = x \cdot k \cdot g $ et $ t = g^{-1} \cdot y $. Puisque $Z_G^0(\Q_p)$ est contenu dans le centre de $G(\Q_p)$, on voit que $g \cdot k = k \cdot g$. On en déduit que le morphisme (\ref{itm : ensembliste}) est un isomorphisme.

L'action de $K_Z$ est triviale sur $\Sht(G, \mu, b)/K$ alors $\Big( \Sht(G, \mu, b)/K \Big) \times_{\underline{Z_G^0(\Q_p)}} \Sht(Z_G^0, \lambda) \simeq \Big( \Sht(G, \mu, b)/K \Big) \times_{\underline{Z_G^0(\Q_p)}} \Sht(Z_G^0, \lambda)/K_Z $. Maintenant, l'action de $K_Z$ étant triviale sur $\Big( \Sht(G, \mu, b)/K \Big) \times \Sht(Z_G^0, \lambda)/K_Z $, on en déduit que
\[
\Big( \Sht(G, \mu, b)/K \Big) \times_{\underline{Z_G^0(\Q_p)}} \Sht(Z_G^0, \lambda)/K_Z \simeq \Big( \Sht(G, \mu, b)/K \Big) \times_{\underline{Z_G^0(\Q_p)/K_Z}} \Sht(Z_G^0, \lambda)/K_Z.
\]

D'autre part, l'action de $Z_G^0(\Q_p) / K_Z$ sur $\Sht(Z_G^0, \lambda)/K_Z$ étant sans point fixe, on en déduit qu'il est de même pour l'action de $Z_G^0(\Q_p) / K_Z$ sur $\big( \Sht(G, \mu, b)/K \big) \times \big( \Sht(Z_G^0, \lambda)/ K_Z \big)$.
\end{proof}
\begin{remarque} \phantomsection \label{itm : remarque}
	Notons $\overline{\Phi}$ le changement de base vers $\Spa(\C_p)^{\diamond}$ du morphisme $\Phi : \Sht(G, \mu, b) \times_{\Spa(\breve{\Q}_p)^{\diamond}} \Sht(Z_G^0, \lambda) \longrightarrow \Sht(G, b\overline{b}_{\lambda} ,\mu \cdot \lambda)$. On remarque que $\overline{\Phi}$ admet une section. En effet, $\C_p$ étant un corps perfectoïde, on peut choisir une modification $\alpha$ dans $\Sht(Z^0_G, \lambda)(\C_p, \C_p^b)$ et on a également $\alpha^{-1} \in \Sht(Z^0_G, \lambda^{-1})(\C_p, \C_p^b) $. Une section de $\overline{\Phi}$ est donnée par
	\[
	\Sht(G,\mu \cdot \lambda, b\overline{b}_{\lambda} ) \ni \beta \longmapsto \big( \beta \times \alpha^{-1}, \alpha \big) \in \Sht(G, \mu, b) \times_{\Spa(\C_p)^{\diamond}} \Sht(Z_G^0, \lambda).
	\] 
	En particulier, on a un isomorphisme (qui, à priori, ne commute pas avec les données de descentes)
	\[
	\Sht(G, \mu, b) \times_{\Spa(\C_p)^{\diamond}} \Sht(Z_G^0, \lambda) \simeq \Sht(G ,\mu \cdot \lambda, b\overline{b}_{\lambda}) \times_{\Spa(\C_p)^{\diamond}} \Sht(Z_G^0, 1).
	\] 
	Il y a également un isomorphisme $G(\Q_p) \times \J_b(\Q_p)$-équivariant entre $\Sht(G, \mu \cdot \lambda, b \overline{b}_{\lambda})_{\C_p}$ et $\Sht(G, \mu, b)_{\C_p}$ donné par 
	\[
	 \Sht(G, \mu, b) \ni \gamma \longmapsto \alpha \times \gamma \in \Sht(G, \mu \cdot \lambda, b \overline{b}_{\lambda})
	\]
	qui, à priori, ne commute pas avec les données de descentes.
\end{remarque}

\section{Groupes de cohomologie de quelques espaces de Rapoport-Zink} \label{itm : application}
Dans ce paragraphe, on va calculer la cohomologie de quelques espaces de Rapoport-Zink non ramifiés en utilisant les résultats des paragraphes précédents et \cite{Boyer09}, \cite{Boyer14}. 

Considérons un triplet $(G, \mu, b)$ où $G$ est un groupe réductif (non ramifié) sur $\Q_p$, $ b \in G(\breve{\Q}_p) $ et $\mu \in X^{+}_{*}(T)$ minuscule avec $b \in B(G, \mu)$. On suppose que $(G, \mu, b)$ correspond à une donnée de Rapoport-Zink de type $EL$ (\cite{RZ96}). À une telle donnée $(G, \mu, b)$, on associe un groupe $p$-divisible $\mathbb{X}$ avec structures additionnelles. Notons $\mathcal{M}(G, \mu, b)$ le foncteur qui classifie les déformations par quasi-isogénies de $\mathbb{X}$ avec structures additionnelles. Ce foncteur est représentable par un schéma formel défini sur $\Spf(\mathcal{O}_{\breve{\Q}_p})$ que l'on notera encore  $\mathcal{M}(G, \mu, b)$. 

Considérons ensuite l'espace rigide $\breve{\mathcal{M}}(G, \mu, b)$ défini sur $\breve{\Q}_p$ associé au schéma formel $\mathcal{M}(G, \mu, b)$. Pour chaque sous groupe compact ouvert $K_p \subset G(\Z_p)$, il existe un espace rigide $\breve{\mathcal{M}}_{K_p}(G, \mu, b)$ qui est défini comme le revêtement étale de $\breve{\mathcal{M}}(G, \mu, b)$ classifiant les $\mathcal{O}_{F}$ trivialisations modulo $K_p$ du module de Tate $p$-adique de groupe $p$-divisible universel sur $\breve{\mathcal{M}}(G, \mu, b)$. On a donc une tour d'espaces rigides $ \big( \breve{\mathcal{M}}_{K_p}(G, \mu, b) \big)_{K_p} $ qui possède à la fois une action de $G(\Q_p) \times J_b(\Q_p) $ et une donnée de descente. 

Il y a un espace de module de Shtukas $\Sht(G, \mu, b)$ associé à chaque donnée $(G, \mu, b)$. On a une identification de diamants sur $\Spa(\breve{\Q}_p)$ : $ \displaystyle \Sht(G, \mu, b) = \mathop{\mathrm{lim}}_{\overleftarrow{K_p}}  \Sht(G, \mu, b) / K_p$.

Puisque $\mu$ est minuscule, d'après le théorème 24.2.5 de \cite{SW17}, on a 
\begin{equation} \phantomsection \label{itm : Shtuka vs RZ}
 \Sht(G, \mu, b) / K_p =  \breve{\mathcal{M}}_{K_p}(G, \mu, b)^{\diamond}.	
\end{equation}

Soit $X$ un $\C_p$-espace analytique tel que $| X |$ est séparé. Un faisceau $\Z_{\ell}$-adique sur $X_{\text{ét}}$ est un système projectif $(\mathcal{F}_n)_{n \in \N}$ de faisceaux en $\Z_{\ell}$-module sur $X_{\text{ét}}$ vérifiant $\ell^n \mathcal{F}_n = 0$. D'après \cite{Far04}, pour un faisceau $\Z_{\ell}$-adique $(\mathcal{F}_n)_{n \in \N}$, on définit le foncteur sections globales à support compact par la formule
\[
\Gamma_c(X, (\mathcal{F}_n)_n) = \{ (s_n)_n \in \mathop{\mathrm{lim}}_{\leftarrow} \Gamma (X, \mathcal{F}_n) | \overline{\cup_n \text{supp}(s_n)} \text{ est compact} \}.
\]

On posera $ \displaystyle H_c^q(X, (\mathcal{F}_n)_n) = R^q\Gamma_c(X, \bullet )((\mathcal{F}_n)_n) $. \\

	Soit $K_p \subset G(\Z_p) $ un niveau. D'après \cite{Far04}, on peut montrer que
	\[
	H_c^{\bullet}(\breve{\mathcal{M}}_{K_p}(G, \mu, b), \Z_{\ell}) = \mathop{\mathrm{lim}}_{\overrightarrow{V}} \mathop{\mathrm{lim}}_{\overleftarrow{n}} H_c^{\bullet}(V \otimes_{\breve{\Q}_p} \C_p, \Z / \ell^n \Z )
	\]
	où $V$ parcourt les ouverts relativement compacts de $\breve{\mathcal{M}}_{K_p}(G, \mu, b)$ et où $\ell \neq p$ est un nombre premier. On note également
	\[
	H_c^{\bullet}(\breve{\mathcal{M}}_{K_p}(G, \mu, b), \Q_{\ell}) := H_c^{\bullet}(\breve{\mathcal{M}}_{K_p}(G, \mu, b), \Z_{\ell}) \otimes \overline{\Q}_{\ell}.
	\]

\begin{exemple} (\cite{Far04})
	Soit $Z$ un tore non ramifié et $K_Z \subset Z(\Q_p)$ un sous groupe compact. Alors, on a
	\[
	H^{q}_c \big( \breve{\mathcal{M}}_{K_Z}(Z, 1, \Id), \Q_{\ell} \big) = \left\lbrace \begin{array}{cc}
	\mathcal{C}_c^{\infty} \big( Z(\Q_p) / K_Z, \overline{\Q}_{\ell} \big) & \text{si q = 0} \\
	0 & \text{sinon}.
	\end{array} \right.
	\]
	et donc 
	\[
	\mathop{\mathrm{lim}}_{\overrightarrow{K_Z}} H^{0}_c \big( \breve{\mathcal{M}}_{K_Z}(Z, 1, \Id), \Q_{\ell} \big) = \mathcal{C}_c^{\infty} (Z(\Q_p), \overline{\Q}_{\ell}) 
	\]
	où action de $\J_b(\Q_p) = \Z(\Q_p)$ se fait par la représentation régulière. L'action du groupe de Weil $W_E$ est l'action triviale. D'après la remarque $\ref{itm : remarque}$, il y a un isomorphisme $Z(\Q_p) \times \Z(\Q_p)$-équivariant $\Sht(Z, \lambda, \lambda(p))_{\C_p} \xrightarrow{\sim} \Sht(Z, 1, \Id)_{\C_p}$. Il y a donc des isomorphismes $Z(\Q_p)$-équivariants 
\[
H^{q}_c \big( \breve{\mathcal{M}}_{K_Z}(Z, 1, \Id), \Q_{\ell} \big) = H^{q}_c \big( \breve{\mathcal{M}}_{K_Z}(Z, \lambda, \lambda(p)), \Q_{\ell} \big).
\]	
\end{exemple}

On considère maintenant les triplets $(G, \mu, b)$ tels que
\begin{enumerate}
	\item[$\bullet$] $G = \Res_{F/\Q_p} GL_{F}(V)$ où $F$ est l'unique extension non ramifiée de degré $d$ de $\Q_p$ et $V$ est un $F$-espace vectoriel de dimension $n$,
	\item[$\bullet$] $\mu$ est un cocaractère minuscule de $G$. Un tel $\mu$ est déterminé par des couples d'entiers $(p_{\tau}, q_{\tau})_{\tau \in I}$ où $I = \hom_{\Q_p}(F, \overline{\Q}_p)$. On suppose de plus que 
	\begin{enumerate}
		\item[$\bullet$] Il existe $\tau_0 \in I$ tel que $(p_{\tau_0}, q_{\tau_0}) = (1, n-1)$,
		\item[$\bullet$] Pour $ \tau \neq \tau_0 $ on a $(p_{\tau}, q_{\tau}) = (0, n)$ ou $(p_{\tau}, q_{\tau}) = (n, 0)$.
	\end{enumerate}
	\item[$\bullet$] $b \in B(G, \mu)$ est l'unique élément basique.
\end{enumerate}

Pour un tel triplet, on a $\J_b(\Q_p) = D^{\times}_{ n / F}$, le groupe des inversibles de l'algèbre de division d'invariant $\frac{1}{n}$, de centre $F$. Le corps de définition de $\mu$ est $F$. On note encore $J \subset I \setminus \{ \tau_0 \} $ l'ensemble de $\tau \neq \tau_0$ tel que $ (p_{\tau}, q_{\tau}) = (n,0) $. Les espaces de Rapoport-Zink associés sont non ramifiés de type $EL$. Par la suite, on notera $\Sht(\mu)$ (resp. $\breve{\mathcal{M}}^{\mu}_K$) pour $\Sht(G, \mu, b)$ (resp. $\breve{\mathcal{M}}_K(G, \mu, b)$). Lorsque la signature $\mu_{\mathcal{LT}} = (1, n-1), (0, n), \cdots, (0, n)$, on retrouve la tour de Lubin-Tate. Pour $\sigma$ une représentation irréductible de $D^{\times}_{ n / F}$, on notera $H_c^q(\breve{\mathcal{M}}_K^{\mu})[\sigma] = \hom_{D^{\times}_{ n / F}} \big(  H_c^q( \breve{\mathcal{M}}^{\mu}_K, \Q_{\ell}), \sigma \big) $.
\begin{definition} \cite{Boyer09}
	\begin{enumerate}
		\item[-] Soient $\pi_1$ et $\pi_2$ des représentations de respectivement $GL_{n_1}(F)$ et $GL_{n_2}(F)$, on note $\pi_1 \times \pi_2$ l'induite parabolique $ \Ind_{P_{n_1, n_1 + n_2}(F)}^{GL_{n_1 + n_2}(F)} (\pi_1\{n_2 / 2\} \otimes \pi_2\{-n_1/2\}) $.
		\item[-] Soit $g$ un diviseur de $n = sg$ et $\pi$ une représentation cuspidale irréductible de $GL_g(F)$ : 
		\begin{enumerate}
			\item[$\bullet$] $\pi\{\frac{1-s}{2}\} \times \pi\{\frac{3-s}{2}\} \times \cdots \times \pi\{\frac{s-1}{2}\}$ possède un unique quotient (resp. sous espace) irréductible. C'est une représentation de Steinberg (resp. de Speh) généralisée notée habituellement $\St_s(\pi)$ (resp. $\Speh_s(\pi)$). 
			\item[$\bullet$] $\St_{s-i}(\pi)\{\frac{-i}{2}\} \times \Speh_i(\pi)\{\frac{s-i}{2}\}$ possède un unique sous espace irréductible que l'on note $ \LT_{\pi}(s,i) $. En particulier, pour $i = 0$ (resp. $i = s-1$), on retrouve $\St_s(\pi)$ (resp. $\Speh_s(\pi)$)
		\end{enumerate}
	\item[-] Pour $\pi$ une représentation irréductible cuspidale de $GL_g(F)$ et $t > 0$, $\pi[t]_D$ désignera la représentation $\JL^{-1}(\St_t(\pi))^{\vee}$ de $D^{\times}_{n / F}$. 
	\end{enumerate}	
\end{definition} 
\begin{théorème}
		 Pour tout diviseur $g$ de $n = gs$ et toute représentation irréductible cuspidale $\pi$ de $GL_g(F)$, on a alors des isomorphismes $G(\Q_p) \times W_{F}$-équivariants
		\[
		\mathop{\mathrm{lim}}_{\overrightarrow{K}} H_c^{n-1-i}(\breve{\mathcal{M}}_K^{\mu})[\pi[s]_D] = \left\lbrace \begin{array}{cc}
		\displaystyle \LT_{\pi}(s,i) \otimes \mathcal{L}(\pi) | \cdot |^{ - \frac{s(g+1) - 2(i+1)}{2}} \cdot \prod_{\tau \in J}  \omega_i \circ (\prescript{\tau}{}{\rec}^{-1}_{F}) & 0 \leq i < s \\
		0 & i < 0
		\end{array} \right. 
		\]
		où $\omega_i$ est le caractère central de $\LT_{\pi}(s,i)$ et $\rec^{-1}_{F}$ est le morphisme de réciprocité d'Artin et où $\prescript{\tau}{}{\rec}^{-1}_{F} := \tau \cdot \rec^{-1}_{F} \cdot \tau^{-1}$.
\end{théorème}
\begin{remarque}
	Tout énoncé valable sur l'espace de Lubin-Tate trouvera son analogue dans la situation plus générale précédente. En particulier, dans \cite{Boyer14} il est annoncé que les $ \displaystyle \mathop{\mathrm{lim}}_{\overrightarrow{K}} H_c^{\bullet}(\breve{\mathcal{M}}^{\mu_{\mathcal{LT}}}_{K_p}, \Z_{\ell}) $ sont $\Z_{\ell}$-libres, ce qui impliquerait
	la même propriété pour $ \displaystyle \mathop{\mathrm{lim}}_{\overrightarrow{K}} H_c^{\bullet}(\breve{\mathcal{M}}^{\mu}_{K_p}, \Z_{\ell}) $.
\end{remarque}
\begin{proof}
On exploitera les résultats obtenus dans les sections précédentes pour calculer la cohomologie de la tour $ \big( \breve{\mathcal{M}}^{\mu}_{K_p} \big)_{K_p} $. Il s'agit de trouver les relations de ce dernier avec la tour de Lubin-Tate. Tout d'abord, d'après \cite{Boyer09}, on a des isomorphismes $G(\Q_p) \times W_{F}$-équivariants
\[
\mathop{\mathrm{lim}}_{\overrightarrow{K}} H_c^{n-i-1}(\breve{\mathcal{M}}_K^{\mu_{\mathcal{LT}}})[\pi[s]_D] = \left\lbrace \begin{array}{cc}
\LT_{\pi}(s,i) \otimes \mathcal{L}(\pi) | \cdot |^{ - \frac{s(g+1) - 2(i+1)}{2}} & 0 \leq i < s \\
0 & i < 0.
\end{array} \right.
\]

Le groupe $G = \Res_{F/\Q_p} GL_{F}(V)$ est déployé sur $F$ et $\displaystyle G_{F}(F) \simeq \prod_{\tau \in I} GL_n(F)$. Le centre de $G$ est $ Z = \Res_{F/\Q_p} GL_1$ et il est aussi déployé sur $F$ : $ \displaystyle Z_{F} \simeq \prod_{\tau \in I} GL_1(F)$.
Identifions $I$ avec $\Z / d\Z$, le groupe de caractère est donné par
\[
X_{*}(Z) = \Big\{ (x_{ij})_{i \in \Z / d\Z, 1\leq j \leq n}  \ \vert \ \forall i,j \ x_{i,j} \in \Z \Big\}.
\]
Le groupe de Weil associé est $ W = S^d_n$. Le cocaractère $\mu$ s'écrit sous la forme $ (x_{i,j})_{i \in \Z / d\Z, 1\leq j \leq n} $
\[
x_{i,j} = \left\lbrace \begin{array}{ccc}
(1, 0, \cdots, 0) & \mathop{\mathrm{si}}\nolimits \ i = i_0 \\
(1, 1, \cdots, 1) & \mathop{\mathrm{si}}\nolimits \ i \in J \\
(0, 0, \cdots, 0) & \mathop{\mathrm{sinon.}}\nolimits
\end{array} \right. 
\]

Considérons maintenant le cocaractère $\lambda = (z_{i,j})_{i \in \Z / d\Z, 1\leq j \leq n}$ où $z_{i,j} = \left\lbrace \begin{array}{cc}
1 & \mathop{\mathrm{si}}\nolimits \ i \in J \\
0 & \mathop{\mathrm{sinon.}}\nolimits
\end{array} \right. $

Il est clair que $\lambda(p)$ est dans le centre de $G(\breve{\Q}_p)$ et d'autre part, on constate que $\mu = \mu_{\mathcal{LT}} \cdot \lambda$. On voit également que $ b_{\mathcal{LT}} \cdot \lambda(p) $ est l'unique classe basique de $B(G, \mu)$. D'après la remarque \ref{itm : remarque}, il y a un isomorphisme $G(\Q_p) \times \J_b(\Q_p)$-équivariant
\[
\Sht(\mu)_{\C_p} \xrightarrow{\sim} \Sht(\mu_{\mathcal{LT}})_{\C_p}. 
\]	
Pour toute représentation supercuspidale $\pi$ de $GL_g(F)$, il y a alors des isomorphismes $G(\Q_p)$-équivariants
\begin{equation} \phantomsection \label{itm : sans action galoisienne}
	\mathop{\mathrm{lim}}_{\overrightarrow{K}} H_c^q(\breve{\mathcal{M}}_K^{\mu})[\pi[s]_D] = \mathop{\mathrm{lim}}_{\overrightarrow{K}} H_c^q(\breve{\mathcal{M}}_K^{\mu_{\mathcal{LT}}})[\pi[s]_D].
\end{equation}
En particulier, on a $ H_c^{n-i-1}(\breve{\mathcal{M}}^{\mu}_K)[\pi[s]_D] = 0 $ pour $i < 0$.

D'après le corollaire \ref{itm : niveau fini}, il y a un isomorphisme $D^{\times}_{ n / F}$-équivariant d'espaces rigides qui commute avec la donnée de descente
\begin{equation*}
\breve{\mathcal{M}}_K^{\mu_{\mathcal{LT}}} \times_{\underline{Z(\Q_p)/ K_Z}}   \breve{\mathcal{M}}_{K_Z}(Z, \lambda, \lambda(p))  \longrightarrow  \breve{\mathcal{M}}^{\mu}_K.	
\end{equation*}
On en déduit qu'il y a un isomorphisme $D^{\times}_{ n / F} \times W_F$-équivariant
\begin{equation} \phantomsection \label{itm : produit tensoriel dérivé}
R\Gamma_c \Big( (\breve{\mathcal{M}}_K^{\mu_{\mathcal{LT}}})_{\C_p} \times \breve{\mathcal{M}}_{K_Z}(Z, \lambda, \lambda(p))_{\C_p}, \Q_{\ell} \Big) \otimes^{\mathbb{L}}_{\Q_{\ell}[Z(\Q_p)/ K_Z]} \Q_{\ell} \xrightarrow{\sim} R\Gamma_c \Big( (\breve{\mathcal{M}}^{\mu}_K)_{\C_p}, \Q_{\ell} \Big).	
\end{equation}
D'après la remarque \ref{itm : remarque}, on voit que $(\breve{\mathcal{M}}^{\mu_{\mathcal{LT}}}_K)_{\C_p} \times   \breve{\mathcal{M}}_{K_Z}(Z, \lambda, \lambda(p))_{\C_p}$ est un $Z(\Q_p)/ K_Z$-torseur trivial au-dessus de $(\breve{\mathcal{M}}^{\mu}_K )_{\C_p}$. Alors, le produit tensoriel dérivé dans (\ref{itm : produit tensoriel dérivé}) dégénère en un produit tensoriel. Puisque les groupes de cohomologie supérieure de $\breve{\mathcal{M}}_{K_Z}(Z, \lambda, \lambda(p))_{\C_p}$ s'annulent, la formule de Kunneth implique qu'il y a un isomorphisme $ D^{\times}_{ n / F} \times W_F $-équivariant
\[
H^q_c \Big( (\breve{\mathcal{M}}^{\mu_{\mathcal{LT}}}_K)_{\C_p} \times \breve{\mathcal{M}}_{K_Z}(Z, \lambda, \lambda(p))_{\C_p}, \Q_{\ell} \Big) \simeq H^q_c \Big( (\breve{\mathcal{M}}^{\mu_{\mathcal{LT}}}_K)_{\C_p}, \Q_{\ell} \Big) \times H^0_c \Big( \breve{\mathcal{M}}_{K_Z}(Z, \lambda, \lambda(p))_{\C_p}, \Q_{\ell} \Big)
\]
On en déduit qu'il y a un isomorphisme $W_F$-équivariant entre $  H_c^{q}( \breve{\mathcal{M}}^{\mu}_K)[\pi[s]_D]$ et
\[
\Big( H_c^{q}( \breve{\mathcal{M}}^{\mu_{\mathcal{LT}}}_K)[\pi[s]_D] \times \hom( H^0_c ( \breve{\mathcal{M}}_{K_Z}(Z, \lambda, \lambda(p)), \Q_{\ell}) , \Q_{\ell} ) \Big) \otimes_{\Q_{\ell}[Z(\Q_p)/ K_Z]} \Q_{\ell}.
\]
D'après \cite{Chen}, il y a des isomorphismes $Z(\Q_p) \times W_{F}$-équivariants
\[
\hom_{Z_{\lambda}(\Q_p)} \big(  H^0( \breve{\mathcal{M}}_{K_Z}(Z, \lambda, \lambda(p)), \Q_{\ell}), \omega_i \big) = \omega_i \otimes \prod_{\tau \in J} \omega \circ ( \prescript{\tau}{}{\rec}^{-1}_{F} )
\] 
On en déduit que pour $0 \leq i < s$, il y a une inclusion
\begin{equation} \phantomsection \label{itm : inclusion niveau fini}
	\LT_{\pi}(s,i)^K \otimes \mathcal{L}(\pi) | \cdot |^{-\frac{s(g+1) - 2(i+1)}{2}} \cdot \prod_{\tau \in J}  \omega_{\pi} \circ ( \prescript{\tau}{}{\rec}^{-1}_{F} ) \hookrightarrow  H_c^{n-i-1}(\breve{\mathcal{M}}^{\mu}_K)[\pi[s]_D].
\end{equation}
Mais l'isomorphisme (\ref{itm : sans action galoisienne}) implique que l'inclusion (\ref{itm : inclusion niveau fini}) est en fait un isomorphisme de représentations de $W_F$. En prenant la limite projective lorsque le niveau $K$ varie, on en déduit que, pour $ 0 \leq i < s $,
\[
\mathop{\mathrm{lim}}_{\overrightarrow{K}} H_c^{n-i-1}(\breve{\mathcal{M}}^{\mu}_K)[\pi[s]_D] = \LT_{\pi}(s,i) \otimes \mathcal{L}(\pi) | \cdot |^{-\frac{s(g+1) - 2(i+1)}{2}} \cdot \prod_{\tau \in J}  \omega_{\pi} \circ ( \prescript{\tau}{}{\rec}^{-1}_{F}). 
\]
\end{proof}

\thispagestyle{fancy}
\fancyhf{}
\lfoot{NGUYEN Kieu Hieu \quad $\bullet$ \quad \textit{Email} kieuhieu.nguyen@math.univ-paris13.fr, Université Paris 13, Sorbonne Paris-Cité, LAGA, CNRS, UMR 7539, F-93430, Villetaneuse, FRANCE, PerCoLarTor, ANR-14-CE25}

\end{document}